\newtheorem{thm}{Theorem}[section]
\newtheorem{prop}[thm]{Proposition}
\newtheorem{cor}[thm]{Corollary}
\theoremstyle{definition}
\newtheorem{definition}[thm]{Definition}
\theoremstyle{remark}
\newtheorem{remark}[thm]{Remark}
\title{Matrix-Valued Hermite and Laguerre polynomials 
via Quadratic Transformation}
\author[I. Pacharoni \and V. Torres]{Inés Pacharoni
\and A. Victoria Torres
}
\address{CIEM-FaMAF, Universidad Nacional de Córdoba, Argentina}
\email{ines.pacharoni@unc.edu.ar,victoria.torres.999@unc.edu.ar}
\begin{document}

\begin{abstract}
We present the first systematic extension of the classical Hermite--Laguerre quadratic correspondence to the matrix‑valued setting. 
Starting from a Hermite‑type weight matrix $W(x)=e^{-x^{2}}Z(x)$ with $W(x)=W(-x)$, the change of variables $y=x^{2}$ produces two Laguerre‑type weights with parameters $\alpha=-\tfrac12$ and $\alpha=\tfrac12$ and relates the corresponding sequences of matrix‑valued orthogonal polynomials through an explicit decomposition into even and odd subsequences
We prove that this transformation preserves differential operators and Darboux transformations, thereby establishing a direct structural link between the Hermite and Laguerre sides and providing new constructive tools for the matrix Bochner problem.  
Concrete families---including a new $3\times3$ example and 
 an arbitrary‑size family built from block‑nilpotent matrices 
---illustrate the theory and supply fresh sources of matrix‑valued orthogonal polynomials endowed with non‑trivial differential algebras.


\end{abstract}

\keywords{Matrix-valued orthogonal polynomials, Hermite polynomials, Laguerre polynomials, differential operators, Darboux transformations, Bochner problem}
\subjclass[2020]{42C05, 33C45, 33C47, 34L10}

\thanks{This paper was partially supported by  CONICET, PIP
N°:11220200102031, SeCyT-UNC}

\maketitle

\section{Introduction}

In the scalar setting, the Hermite and Laguerre polynomials are related through the transformation \( y = x^2 \). Specifically,
\[
h_{2n}(x) = \ell_n^{(-1/2)}(x^2), \qquad h_{2n+1}(x) = x \, \ell_n^{(1/2)}(x^2),
\]
where \( h_n(x) \) denotes the \( n \)-th monic Hermite polynomial, orthogonal with respect to the weight \( w(x) = e^{-x^2} \) on \( \mathbb{R} \), and \( \ell_n^{(\alpha)}(y) \) denotes the \( n \)-th monic Laguerre polynomial, orthogonal with respect to \( w_\alpha(y) = y^\alpha e^{-y} \) on \( (0, \infty) \). These identities relate even and odd Hermite polynomials to Laguerre polynomials with parameters \( \alpha = -\frac{1}{2} \) and \( \alpha = \frac{1}{2} \), respectively.

This relationship, well-established in the scalar case \cite{Szego39}, holds significant importance in mathematical physics and beyond. Hermite polynomials arise as eigenfunctions of the quantum harmonic oscillator, while Laguerre polynomials feature in radial solutions of the Schrödinger equation for hydrogen-like atoms. The transformation \( y = x^2 \) links these contexts, mapping Cartesian coordinates (Hermite) to radial ones (Laguerre), and simplifies computations involving three-term recurrence relations, Rodrigues formulas, and integral representations that appear in quantum mechanics, probability, and approximation theory.

Despite the increasing interest in matrix-valued orthogonal polynomials (MVOP), the classical Hermite–Laguerre relationship has not been explored in this setting. In this paper, we extend it under the transformation \( y = x^2 \), showing that this generalization requires specific structural conditions, (in particular  the symmetry \( W(x) = W(-x) \)), to preserve the key features of the scalar case. We single out a particular family of Hermite-type weights already studied in the literature, now understood within a broader structural framework.

We consider Hermite-type weight matrices of the form
\[
W(x) = e^{-x^2} Z(x), \quad \text{with} \quad W(x) = W(-x),
\]
and show that the change of variables \( y = x^2 \) produces a Laguerre-type weight
\[
V(y) = y^{-1/2} e^{-y} Z(\sqrt{y}),
\]
along with a natural decomposition of the associated monic orthogonal polynomials \( \{ H_n(x) \}_{n\geq 0} \) into even and odd subsequences. More precisely (Theorem~\ref{hermite-laguerre}):
\[
H_{2n}(x) = L_n(x^2), \qquad H_{2n+1}(x) = x F_n(x^2),
\]
where \( \{L_n\}_{n\geq 0}  \) and \( \{F_n\}_{n\geq 0}  \) are the sequences of monic orthogonal polynomials associated with the weights 
$V(y)$ and   $$U(y) = y^{1/2} e^{-y} Z(\sqrt{y}),$$ respectively. These correspond to Laguerre weights with parameters \( \alpha = -\frac{1}{2} \) and \( \alpha = \frac{1}{2} \), extending the scalar case to a matrix-valued framework.

In Section~\ref{sec-operator}, we study how differential operators behave under the transformation \( y = x^2 \). Although any operator \( D \) having the Hermite polynomials as eigenfunctions can be formally transformed into an operator \( \tilde{D} \) via this change of variables, it is not clear a priori whether \( \tilde{D} \) will have the Laguerre-type orthogonal polynomials \( \{L_n\}_{n\geq 0}  \) as eigenfunctions. In Theorem~\ref{symmetry-V}, we show that this is indeed the case when the Hermite-type weight \( W(x) \) satisfies \( W(x) = W(-x) \). In fact, under this assumption, the transformed operator \( \tilde{D} \) belongs to the algebra \( \mathcal{D}(V) \), and moreover, \( W \)-symmetric operators are mapped to \( V \)-symmetric ones. A similar result holds for the weight \( U(y) \), relating \( \mathcal{D}(W) \) to \( \mathcal{D}(U) \) (Theorem~\ref{symmetry-Vhat}). This highlights a structural compatibility between the Hermite and Laguerre settings that fails for general Hermite-type weights. In particular, our result provides a constructive criterion for generating new weights with nontrivial differential algebras, contributing to the broader understanding of the matrix Bochner problem.

In Section~\ref{sec-darboux}, we study how Darboux transformations interact with the Hermite–Laguerre correspondence. We prove that if \( W(x) \) is a Darboux transformation of the classical diagonal Hermite weight \( e^{-x^2} I \), then the associated Laguerre-type weights \( V(y) \) and \( U(y) \) are Darboux transformations of the classical Laguerre weights \( w_{-1/2}(y) I \) and \( w_{1/2}(y) I \), respectively (Theorem~\ref{thm:darboux-equivalence}). This result is illustrated with explicit examples, including a new \( 3 \times 3 \) matrix-valued Hermite-type weight for which we construct the corresponding Laguerre-type orthogonal polynomials and verify the Darboux relations.

Finally, in Section~\ref{sec-NxN}, we extend our results to arbitrary matrix size, showing how the Hermite–Laguerre correspondence and the preservation of operator structures carry over to higher dimensions. These general constructions provide new structural insights into MVOP and practical tools for their analysis.

\smallskip
 Beyond the specific results established in this paper, the matrix-valued Hermite–Laguerre correspondence illustrates a broader principle: classical transformations can retain their algebraic and analytic structure when extended to the matrix setting. We hope this perspective will encourage further exploration of how such transformations can be used to construct and analyze new families of matrix-valued orthogonal polynomials and their differential operators.

\section{Preliminaries on matrix-valued orthogonal polynomials}\label{sec-background}

Let $W(x)$ be an $N \times N$ matrix-valued function defined on an interval $I \subseteq \mathbb{R}$. We say that $W(x)$ is a \emph{weight matrix} if it is integrable on $I$, positive definite almost everywhere, and all its moments exist
\[
\int x^n W(x) \, dx < \infty, \quad \text{for all } n \geq 0.
\]

Given such a weight matrix $W(x)$, we consider the Hermitian sesquilinear form
\[
\langle P, Q \rangle_W = \int P(x) W(x) Q(x)^* \, dx,
\]
on the space of $N \times N$ matrix polynomials $\operatorname{Mat}_N(\mathbb{C})[x]$, where $P(x)^*$ denotes the conjugate transpose of $P(x)$. Throughout this paper, we only consider weight matrices that decay exponentially at infinity, in order to avoid pathological cases and technical complications.

This inner product determines a (unique) sequence of monic orthogonal matrix polynomials $\{P_n\}_{n \geq 0}$ such that $\deg(P_n) = n$. Any other orthogonal sequence $\{Q_n\}_{n \geq 0}$ for the same weight can be written as $Q_n(x) = M_n P_n(x)$ for a sequence of invertible matrices $M_n$.

As in the scalar case, the monic orthogonal polynomials satisfy a three-term recurrence relation
\[
xP_n(x) = P_{n+1}(x) + B_n P_n(x) + A_n P_{n-1}(x), \quad n \geq 0,
\]
with initial condition $P_{-1}(x) = 0$.

Along this paper, we consider that an arbitrary matrix differential operator
\begin{equation}\label{D orden s}
D = \sum_{i=0}^s \frac{d^i}{dx^i} F_i(x),
\end{equation}
acts on the right   on  a matrix-valued function $P$ i.e.
$(P\cdot {D})(x)=\displaystyle \sum_{i=0}^s  \frac{d^i P(x)}{dx^i } F_i(x).$
We consider the algebra of these operators with polynomial coefficients $$\operatorname{Mat}_{N}(\Omega[x])=\Big\{D = \sum_{j=0}^{s} \frac{d^j}{dx^j} F_{j}(x) \, : F_{j} \in \operatorname{Mat}_N(\mathbb{C})[x] \Big \}.$$
We focus on the class of operators whose coefficients $F_i(x)$ are matrix polynomials with $\deg(F_i) \leq i$:
\begin{equation*}
\mathbf{D} = \left\{ \sum_{j=0}^s \frac{d^j}{dx^j} F_j(x) : F_j \in \operatorname{Mat}_N(\mathbb{C})[x], \deg(F_j) \leq j \right\}.
\end{equation*}

From Propositions 2.6 and 2.7 in \cite{GT07} we have the following result 

\begin{prop}
\label{GTeigenvalues}
Let $W(x)$ be a weight matrix and $\{P_n\}_{n \geq 0}$ its monic orthogonal polynomials. If $D$ is a differential operator as in \eqref{D orden s} such that $P_n \cdot D = \Lambda_n P_n$ for all $n\geq 0$, then each $F_i(x)$ is a matrix polynomial of degree at most $i$. Moreover, $D$ is uniquely determined by the sequence $\{\Lambda_n\}_{n \geq 0}$, and the eigenvalues satisfy
\[
\Lambda_n = \sum_{i=0}^s [n]_i F_i^i, \quad \text{for all } n \geq 0,
\]
where $[n]_i = n(n-1)\cdots(n-i+1)$ and $F_i^i$ is the leading coefficient of $F_i(x)$.
\end{prop}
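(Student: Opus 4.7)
The plan is to prove both assertions by examining the leading-degree terms and then running an induction on $k$ to pin down each coefficient $F_k(x)$ from the eigenvalue equations $P_n \cdot D = \Lambda_n P_n$. The monicity of the $P_n$ is the essential ingredient: it forces $\frac{d^i P_n}{dx^i}$ to have leading term $[n]_i\, x^{n-i} I$, which will supply the identity matrix we need to invert at each stage.

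\smallskip

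First I would establish the degree bound $\deg F_i \leq i$ by induction on $i$. For the base case $i=0$, take $n=0$: since $P_0 = I$ we obtain $F_0(x) = P_0 \cdot D = \Lambda_0 I$, so $F_0$ is constant. For the inductive step, assume $\deg F_j \leq j$ for every $j < k$. Applying $D$ to $P_k$ and using that $\frac{d^i P_k}{dx^i} = 0$ for $i>k$, the eigenvalue equation reads
\[
\sum_{i=0}^{k} \frac{d^i P_k}{dx^i}\, F_i(x) = \Lambda_k P_k(x).
\]
Since $P_k$ is monic of degree $k$, $\frac{d^k P_k}{dx^k} = k!\, I$, so I can solve for $F_k$:
\[
k!\, F_k(x) = \Lambda_k P_k(x) - \sum_{i=0}^{k-1} \frac{d^i P_k}{dx^i}\, F_i(x).
\]
Each summand on the right has degree $\leq (k-i) + i = k$ by the inductive hypothesis, and $\Lambda_k P_k$ has degree $k$, so $\deg F_k \leq k$ as required. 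This recursion also shows that $F_k$ is uniquely determined by $\Lambda_0,\dots,\Lambda_k$ together with $P_0,\dots,P_k$.

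\smallskip

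Next, the eigenvalue formula follows by comparing the coefficient of $x^n$ on the two sides of $P_n \cdot D = \Lambda_n P_n$. Writing $P_n(x) = x^n I + (\text{lower})$ gives $\frac{d^i P_n}{dx^i} = [n]_i x^{n-i} I + O(x^{n-i-1})$, and writing $F_i(x) = F_i^i x^i + O(x^{i-1})$ (with $F_i^i = 0$ when $\deg F_i < i$), the product $\frac{d^i P_n}{dx^i}\, F_i(x)$ contributes $[n]_i F_i^i\, x^n$ to the top degree and nothing higher, precisely because the degree bound from the previous step prevents any term of degree $>n$ from appearing. Summing over $i$ and matching with the leading coefficient $\Lambda_n$ of the right-hand side yields $\Lambda_n = \sum_{i=0}^{s} [n]_i F_i^i$.

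\smallskip

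Finally, for uniqueness over the whole sequence $\{\Lambda_n\}$, suppose $D'$ is another operator with the same eigenvalues. Then $E = D - D' = \sum_j \frac{d^j}{dx^j} G_j(x)$ satisfies $P_n \cdot E = 0$ for every $n$. Because the $P_n$ form a basis of $\operatorname{Mat}_N(\mathbb{C})[x]$ as a left $\operatorname{Mat}_N(\mathbb{C})$-module, $E$ annihilates every matrix polynomial; in particular $x^k I \cdot E = \sum_{j} [k]_j x^{k-j} G_j(x) = 0$ for all $k \geq 0$. Evaluating at $k=0$ gives $G_0 = 0$; then $k=1$ gives $G_1 = 0$; inductively $G_j = 0$ for all $j$, so $E=0$ and $D = D'$. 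The only point that requires care is the very first step of the induction, where one must make sure that no cancellations with higher-degree ``phantom'' terms in $F_i$ can occur; this is precisely what the degree-bound portion of the argument rules out, and it is really the only non-bookkeeping input in the proof.
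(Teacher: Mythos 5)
Your proof is correct. The paper does not prove this proposition at all---it imports it directly from Propositions 2.6 and 2.7 of Gr\"unbaum--Tirao \cite{GT07}---and your self-contained argument (solving for $F_k$ recursively using $\tfrac{d^k P_k}{dx^k}=k!\,I$ to get the degree bounds and uniqueness, then matching the coefficient of $x^n$ for the eigenvalue formula) is essentially the standard one given there, so no comparison beyond this is needed.
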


The set of all such operators forms an algebra
\[
\mathcal{D}(W) = \big\{ D \in \operatorname{Mat}_N(\Omega[x]) : P_n \cdot D = \Lambda_n(D) P_n \text{ for all } n \big\},
\]
where the $\Lambda_n(D)$ are constant matrices.

The next proposition, also from \cite{GT07}, provides a useful sufficient condition to recognize operators in $\mathcal{D}(W)$ via symmetry with respect to the inner product:

\begin{prop}[\cite{GT07}]\label{symmetry condition}
Let $W(x)$ be a weight matrix, and let $D = \sum_{i=0}^s \frac{d^i}{dx^i} F_i(x) \in \mathbf D$. 
If
\[
\langle P \cdot D, Q \rangle_W = \langle P, Q \cdot D \rangle_W,  \qquad \text{for all } P,Q \in \operatorname{Mat}_N(\mathbb{C})[x],
\]
then $D \in \mathcal{D}(W)$.
\end{prop}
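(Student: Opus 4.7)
The plan is to use the orthogonality of $\{P_n\}_{n\ge 0}$ together with the degree bound $\deg F_j \le j$ built into membership in $\mathbf{D}$ to show directly that each $P_n$ is an eigenfunction of $D$.

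First I would record a key degree estimate: if $D = \sum_{j=0}^s \frac{d^j}{dx^j} F_j(x) \in \mathbf{D}$ and $P$ is any matrix polynomial of degree $n$, then
\[
P\cdot D = \sum_{j=0}^s \frac{d^j P}{dx^j}\, F_j(x)
\]
has degree at most $n$, since $\deg\bigl(\tfrac{d^j P}{dx^j}\bigr) \le n-j$ and $\deg F_j \le j$. In particular, $P_n \cdot D$ lies in the span of $P_0,\dots,P_n$, so we may expand uniquely
\[
P_n \cdot D \;=\; \sum_{k=0}^{n} C_{n,k}\, P_k(x)
\]
for some matrices $C_{n,k} \in \operatorname{Mat}_N(\mathbb{C})$.

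Next I would exploit symmetry. For any $m < n$, the hypothesis gives
\[
\langle P_n \cdot D,\, P_m \rangle_W \;=\; \langle P_n,\, P_m \cdot D \rangle_W.
\]
By the degree estimate, $P_m \cdot D$ has degree at most $m < n$, so by orthogonality the right-hand side vanishes. On the other hand, substituting the expansion above and using $\langle P_k, P_m\rangle_W = \delta_{km}\langle P_m, P_m\rangle_W$, the left-hand side equals $C_{n,m}\langle P_m, P_m\rangle_W$. Since $W(x)$ is positive definite almost everywhere, the squared norm $\langle P_m, P_m\rangle_W$ is a positive definite (hence invertible) matrix, so we conclude $C_{n,m} = 0$ for every $m < n$.

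Therefore $P_n \cdot D = C_{n,n} P_n$, i.e.\ $P_n$ is an eigenfunction of $D$ with eigenvalue $\Lambda_n := C_{n,n}$, which is precisely the definition of $D \in \mathcal{D}(W)$. There is no real obstacle here; the only point requiring a bit of care is ensuring the degree bound $\deg F_j \le j$ (which forces $P_n \cdot D$ to have degree $\le n$) is genuinely used, and invoking the positive definiteness of $W$ to invert the diagonal Gram matrices $\langle P_m, P_m\rangle_W$ so as to extract the vanishing of each $C_{n,m}$ individually.
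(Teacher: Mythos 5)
Your proof is correct; the paper itself gives no argument for this proposition (it is quoted from \cite{GT07}), and your reasoning --- bounding $\deg(P_n\cdot D)\le n$ via $\deg F_j\le j$, expanding $P_n\cdot D=\sum_{k\le n}C_{n,k}P_k$, and killing the terms with $k<n$ by pairing the symmetry identity against $P_m$ and inverting the positive-definite Gram matrix $\langle P_m,P_m\rangle_W$ --- is precisely the standard argument in the cited reference. Nothing is missing.
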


Operators satisfying this identity are called \emph{$W$-symmetric}. They form a real vector space $\mathcal{S}(W) \subseteq \mathcal{D}(W)$, and one has the decomposition
\[
\mathcal{D}(W) = \mathcal{S}(W) \oplus i \mathcal{S}(W).
\]

\section{ The Hermite-Laguerre relationship}\label{sec-weight}

It is a classical fact that Hermite and Laguerre polynomials are related through the change of variables $y = x^2$. In particular, even and odd Hermite polynomials can be expressed in terms of Laguerre polynomials with different parameters.

The substitution $y = x^2$ transforms the Hermite weight $e^{-x^2} \, dx$ on $\mathbb{R}$ into the Laguerre weight with parameter $\alpha = -\tfrac{1}{2}$ on $(0, \infty)$:
\[
e^{-x^2} \, dx = y^{-1/2} e^{-y} \, dy.
\]

Let $\{h_n(x)\}_{n \geq 0}$ and $\{\ell_n^{(\alpha)}(y)\}_{n \geq 0}$ denote the monic Hermite and Laguerre polynomials of parameter $\alpha$, respectively. Since the even-indexed Hermite polynomials $h_{2n}(x)$ are polynomial functions in $x^2$, the substitution $y = x^2$ yields the identity
\[
h_{2n}(x) = \ell_n^{(-1/2)}(x^2).
\]

The odd-indexed Hermite polynomials $h_{2n+1}(x)$ are odd functions, and they can be written in the form $h_{2n+1}(x) = x \, p_n(x^2)$, where $p_n(y)$ is a monic polynomial of degree $n$. Under the same substitution, one finds that the polynomials $p_n(y)$ are orthogonal with respect to the Laguerre weight $w_{1/2}(y) = y^{1/2} e^{-y}$, leading to
\[
h_{2n+1}(x) = x \, \ell_n^{(1/2)}(x^2).
\]

These identities reflect a classical link between Hermite and Laguerre polynomials under the quadratic transformation $y = x^2$. The goal of this section is to generalize this relation to the setting of matrix-valued orthogonal polynomials.

Throughout this paper, by a \emph{Hermite-type} weight, we mean a weight matrix of the form
\[
W(x) = e^{-x^2} Z(x),
\]
whereas by a \emph{Laguerre-type} weight, we mean a weight matrix of the form
\[
W(x) = x^\alpha e^{-x} Z(x), \quad \text{with } \alpha > -1.
\]
These definitions are not standard but are convenient for our purposes.
In both cases, the weight matrices are assumed to have exponential decay at infinity. Although we do not impose additional explicit restrictions on $Z(x)$, in all explicit examples considered, it will be a matrix polynomial.

We now turn to the matrix-valued setting and investigate how the classical Hermite–Laguerre correspondence extends under the transformation \( y = x^2 \). We focus on Hermite-type weight matrices that are invariant under the reflection \( x \mapsto -x \), that is,
\[
W(x) = e^{-x^2} Z(x), \quad \text{with} \quad W(x) = W(-x).
\]
This structural condition is not satisfied by all known Hermite-type weights, but it plays a fundamental role in our approach: it ensures that the associated orthogonal polynomials exhibit parity behavior analogous to the scalar case, and it allows us to define Laguerre-type weights via the transformation \( y = x^2 \). 

We begin by analyzing how this symmetry impacts the associated orthogonal polynomials. In particular, we show that even and odd degrees correspond to even and odd functions, respectively, mirroring the classical behavior. 

We are grateful to Ignacio Bono Parisi for suggesting the idea behind Proposition~3.1, which led to a significant simplification in the analysis of this parity property.

\begin{prop}\label{par-impar}
Let $W(x) = e^{-x^2} Z(x)$ be a Hermite-type weight matrix satisfying $W(x) = W(-x)$. Then, any sequence $\{P_n(x)\}_{n\geq 0}$ of orthogonal polynomials with respect to $W(x)$ has the property that $P_{2n}(x)$ are even functions, and $P_{2n+1}(x)$ are odd functions.
\end{prop}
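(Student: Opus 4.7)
The plan is to exploit the reflection symmetry of $W$ by constructing, from any orthogonal sequence $\{P_n\}$, a reflected sequence and showing it agrees with the original. Concretely, I would first reduce to the monic case: every orthogonal sequence has the form $Q_n = M_n P_n$ with $\{P_n\}$ the monic sequence and $M_n \in \mathrm{Mat}_N(\mathbb{C})$ invertible, so a parity statement for the monic polynomials transfers verbatim to any other sequence.

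Next I would define $\widetilde P_n(x) := (-1)^n P_n(-x)$. A direct inspection of the highest-degree term shows $\widetilde P_n$ is again monic of degree $n$, since the factor $(-1)^n$ cancels the sign picked up by $(-x)^n$. The core computation is to check that $\{\widetilde P_n\}$ is orthogonal with respect to $W$. I would write
\begin{equation*}
\langle \widetilde P_n, \widetilde P_m\rangle_W = (-1)^{n+m}\int_{\mathbb{R}} P_n(-x)\,W(x)\,P_m(-x)^*\,dx,
\end{equation*}
perform the substitution $u=-x$, and use $W(-u)=W(u)$ to identify the result with $(-1)^{n+m}\langle P_n,P_m\rangle_W$. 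For $n\ne m$ this vanishes, so $\{\widetilde P_n\}$ is a monic orthogonal sequence for $W$.

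By the uniqueness of the monic orthogonal polynomials, $\widetilde P_n = P_n$, i.e.\ $P_n(-x) = (-1)^n P_n(x)$, which is exactly the claim that $P_{2n}$ is even and $P_{2n+1}$ is odd. For a general orthogonal sequence $Q_n = M_n P_n$, one has $Q_n(-x) = M_n P_n(-x) = (-1)^n Q_n(x)$, so the parity property is inherited.

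I do not anticipate a serious obstacle: the argument is essentially the standard symmetry/uniqueness trick from the scalar theory, and the only point requiring care is the matrix ordering inside the inner product, which is harmless because the change of variables affects only the integrand and the adjoint operation commutes with $x\mapsto -x$. The hypothesis $W(-x)=W(x)$ is used in exactly one place—eliminating the minus sign after the substitution—and there is no need for $Z(x)$ to be polynomial or for any further structural assumption.
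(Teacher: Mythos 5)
Your proposal is correct and follows essentially the same route as the paper: reflect the sequence, use $W(x)=W(-x)$ to show the reflected sequence is again orthogonal, and invoke uniqueness (the paper compares leading terms of $P_n(-x)=M_nP_n(x)$ for an arbitrary sequence, while you normalize to the monic case first and then transfer back — an immaterial difference). No gaps.
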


\begin{proof}
Define $Q_n(x) = P_n(-x)$. Since $W(x) = W(-x)$, it follows that $\langle Q_n, Q_m \rangle_W = \langle P_n, P_m \rangle_W$. Hence, $\{Q_n(x)\}_{n\geq 0} $ is also orthogonal with respect to $W(x)$, and there exist invertible matrices $M_n$ such that $P_n(-x) = M_n P_n(x)$. Comparing the leading terms, we conclude $P_n(-x) = (-1)^n P_n(x)$, proving the desired parity.
\end{proof}

We now describe explicitly how the transformation $y = x^2$ acts on these weights, giving rise to a Laguerre-type weight of parameter $\alpha=-\tfrac12$.

\begin{prop}\label{inner product}
Under the transformation $y = x^2$, the Hermite-type weight $W(x) = e^{-x^2} Z(x)$, with $W(x)=W(-x)$, transforms into the Laguerre-type weight
\[
V(y) = y^{-1/2} e^{-y} Z(\sqrt{y}), \quad  \qquad y\in (0,\infty).
\]
Moreover, for all matrix-valued functions $F(x)$ and $G(x)$, it holds that
\[
\langle F, G \rangle_W = \langle \tilde{F}, \tilde{G} \rangle_V,
\]
where $\tilde{F}(y) = F(\sqrt{y})$.
\end{prop}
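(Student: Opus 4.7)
The plan is to verify the identity by a direct change of variables, paying careful attention to how the symmetry $W(x)=W(-x)$ enters. First I would record that since $e^{-x^{2}}$ is even, the condition $W(x)=W(-x)$ forces $Z(-x)=Z(x)$. Next, since $\tilde F(y)=F(\sqrt y)$ is only well-defined when $F$ is an even function of $x$, the natural setting is to read $F$ and $G$ as even matrix-valued functions, i.e.\ $F(x)=\tilde F(x^{2})$, $G(x)=\tilde G(x^{2})$; this is exactly the case that will matter later when applying the result to the even-degree polynomials $P_{2n}$.

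Under these hypotheses the integrand $F(x)\,e^{-x^{2}}Z(x)\,G(x)^{*}$ is a product of three even functions, hence even in $x$. So I would split the integral at $0$ and write
\[
\langle F,G\rangle_{W}=\int_{-\infty}^{\infty}F(x)\,e^{-x^{2}}Z(x)\,G(x)^{*}\,dx
=2\int_{0}^{\infty}F(x)\,e^{-x^{2}}Z(x)\,G(x)^{*}\,dx .
\]
Then I apply the substitution $y=x^{2}$ on $(0,\infty)$, where $dx=\frac{dy}{2\sqrt y}$ and $Z(x)=Z(\sqrt y)$, and the factor $2$ cancels the $\tfrac12$ from the Jacobian, leaving the weight $y^{-1/2}e^{-y}Z(\sqrt y)=V(y)$. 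The result is
\[
\langle F,G\rangle_{W}=\int_{0}^{\infty}\tilde F(y)\,y^{-1/2}e^{-y}Z(\sqrt y)\,\tilde G(y)^{*}\,dy
=\langle\tilde F,\tilde G\rangle_{V},
\]
and the fact that $V$ is indeed a weight matrix on $(0,\infty)$ follows at once from the exponential decay hypothesis together with positive-definiteness of $W$ (evaluated at $x=\sqrt y$).

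The only genuine subtlety, rather than an obstacle, is the precise interpretation of $\tilde F(y)=F(\sqrt y)$: for general $F$ this expression depends on a choice of square root and so is ambiguous on $\mathbb R$, but for even $F$ (such as the even-degree monic orthogonal polynomials produced by Proposition~\ref{par-impar}) the ambiguity disappears because $F(\sqrt y)=F(-\sqrt y)$. Everything else is a routine change of variables, and the symmetry hypothesis $W(x)=W(-x)$ is used exactly once, to guarantee the evenness of the integrand that allows folding $\int_{-\infty}^{\infty}$ onto $\int_{0}^{\infty}$.
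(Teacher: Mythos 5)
Your computation is the same as the paper's---fold the integral over $\mathbb{R}$ onto $(0,\infty)$ using parity of the integrand and substitute $y=x^2$---but you prove strictly less than the stated proposition, and the difference is worth being precise about. The paper claims the identity for \emph{all} $F,G$ and argues it by splitting each function into even and odd parts: the even--even and odd--odd cases are handled by the change of variables, and the mixed-parity contributions are claimed to drop out because $\langle F,G\rangle_W=0$ there. You treat only the even--even case. Two remarks. First, your stated reason for the restriction---that $F(\sqrt y)$ is ambiguous for non-even $F$---does not hold up: $\sqrt y$ denotes the positive square root, so $\tilde F(y)=F(\sqrt y)$ is a well-defined function on $(0,\infty)$ for any $F$, and the odd--odd case goes through by exactly your folding argument, since odd $\times$ even $\times$ odd is again even. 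You should include it; it is the case relevant to the odd-indexed polynomials $H_{2n+1}$ and to Corollary~\ref{H_L-nomonicos}. Second, your instinct that the fully general statement is delicate is nevertheless vindicated, and in fact the paper's own final step is the one with the gap: for $F$ even and $G$ odd one has $\langle F,G\rangle_W=0$, but $\langle\tilde F,\tilde G\rangle_V$ need not vanish---already in the scalar case $Z=1$, $F=1$, $G(x)=x$ one gets $\langle\tilde F,\tilde G\rangle_V=\int_0^\infty e^{-y}\,dy=1$. So the identity holds when $F$ and $G$ have the same parity (which is all that is ever used downstream), not for arbitrary $F,G$; a complete proof should establish both same-parity cases by your computation and state the hypothesis accordingly.
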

\begin{proof}
Let $ F, G $ be matrix-valued functions. The inner product defined by $ W(x) $ can be written as
\[
\langle F,G \rangle_W=\int_0^\infty F(-x)e^{-x^2} Z(x) G(-x)^* \,dx +\int_0^\infty F(x)e^{-x^2}Z(x)G(x)^* \,dx.
\]

Consider the change of variable $ x = \sqrt{y} $. If both $ F $ and $ G $ are even functions, it follows that
\[
    \langle F, G \rangle_W = \int_0^\infty \tilde{F}(y) y^{-1/2} e^{-y} Z(\sqrt{y}) \tilde{G}(y)^* \, dy = \langle \tilde{F}, \tilde{G} \rangle_V.
\]

The same equality holds if $ F $ and $ G $ are odd functions. Since any function can be written as the sum of an even and an odd function, and $ \langle F, G \rangle_W = 0 $ when $ F $ and $ G $ have different parity, the result follows.
\end{proof}

\smallskip

We now state the main result, establishing explicitly how the Hermite-type matrix polynomials split into even and odd subsequences associated with Laguerre-type matrix polynomials under $y=x^2$.


\begin{thm}\label{hermite-laguerre}
Let $ W(x) = e^{-x^2} Z(x) $ be a Hermite-type weight matrix such that 
$ W(x) = W(-x) $, and let $ \{H_n(x)\}_{n \geq 0} $ be the monic matrix-valued orthogonal polynomials with respect to $ W(x) $. Then the following statements hold:
\begin{enumerate}[i)]
    \item The even-indexed polynomials $ H_{2n}(x) $ satisfy
    \[
    H_{2n}(x) = L_n(x^2), \quad \text{for all } n \geq 0,
    \]
    where $ \{L_n(y)\}_{n \geq 0} $ are the monic orthogonal polynomials for the Laguerre-type weight $ V(y) =  y^{-1/2} e^{-y} Z(\sqrt{y}) $.

    \item The odd-indexed polynomials $ H_{2n+1}(x) $ admit a factorization of the form
    \[
    H_{2n+1}(x) = x \, F_n(x^2), \quad \text{for all } n \geq 0,
    \]
    where the polynomials $ \{F_n(y)\}_{n \geq 0} $ form the monic orthogonal sequence for the Laguerre-type weight $ U(y) = y^{1/2} e^{-y} Z(\sqrt{y})=y V(y) $.
\end{enumerate}
\end{thm}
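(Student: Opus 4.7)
The plan is to first use the parity result to \emph{define} the polynomials $L_n$ and $F_n$ via the decomposition, and then verify orthogonality against the two Laguerre-type weights using the change-of-variable identity for the inner product (Proposition~\ref{inner product}).

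First, by Proposition~\ref{par-impar} each $H_{2n}$ is an even matrix polynomial of degree $2n$, so there exists a unique monic matrix polynomial $L_n$ of degree $n$ with $H_{2n}(x) = L_n(x^2)$. Similarly each $H_{2n+1}$ is odd of degree $2n+1$, so there exists a unique monic $F_n$ of degree $n$ with $H_{2n+1}(x) = x\,F_n(x^2)$. This disposes of the ``monic polynomial of the right degree'' part of the statement at essentially no cost; what remains is to identify these $L_n$ and $F_n$ as the monic orthogonal sequences for $V$ and $U$ respectively.

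For part (i), I would apply Proposition~\ref{inner product} to the even functions $H_{2n}$ and $H_{2m}$. Since $\widetilde{H_{2n}}(y) = H_{2n}(\sqrt y) = L_n(y)$, the proposition gives
\[
\langle L_n, L_m\rangle_V \;=\; \langle H_{2n}, H_{2m}\rangle_W,
\]
and the right-hand side vanishes whenever $n\neq m$ by orthogonality of $\{H_k\}$. Together with monicity and the correct degree, this identifies $\{L_n\}$ as the monic orthogonal sequence for $V$.

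For part (ii), the only extra bookkeeping is the factor of $x$. Here $\widetilde{H_{2n+1}}(y) = \sqrt{y}\,F_n(y)$, so Proposition~\ref{inner product} applied to the odd functions $H_{2n+1}$ and $H_{2m+1}$ yields
\[
\langle H_{2n+1},H_{2m+1}\rangle_W \;=\; \int_0^\infty \sqrt{y}\,F_n(y)\,V(y)\,F_m(y)^*\sqrt{y}\,dy \;=\; \langle F_n,F_m\rangle_{U},
\]
since $y V(y) = U(y)$. Orthogonality of $\{H_k\}$ then forces $\langle F_n,F_m\rangle_U = 0$ for $n\neq m$, and again monicity plus degree fix $\{F_n\}$ as the monic orthogonal sequence for $U$. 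There is no real obstacle: the entire argument rests on Propositions~\ref{par-impar} and~\ref{inner product}, and the mildly delicate point is simply keeping track of the $\sqrt{y}$ that appears when one passes an odd polynomial through the substitution $y=x^2$, which is precisely what converts $V$ into $U = yV$.
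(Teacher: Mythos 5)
Your argument is correct and follows essentially the same route as the paper's proof: both rest on Proposition~\ref{par-impar} to get the parity decomposition and on Proposition~\ref{inner product} (equivalently, the substitution $y=x^2$ in the orthogonality integrals) to transfer orthogonality to $V$ and to $U=yV$, concluding by uniqueness of the monic orthogonal sequence. The only cosmetic difference is that you define $L_n,F_n$ directly from the decomposition and then verify orthogonality, whereas the paper names the factors $G_n$ and identifies them with $L_n,F_n$ at the end.
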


\begin{proof}
\emph{i) }  From Proposition \ref{par-impar}, we know that $ H_{2n}(x) $ is an even function, and therefore $ G_n(y)= H_{2n}(\sqrt{y}) $ is a monic polynomial of degree $ n $ in the variable $ y $.

Using Proposition \ref{inner product} and the orthogonality of $ H_{2n}(x) $ with respect to $ W(x) $, it follows that $ \{ G_n(y) \}_{n \geq 0} $ is a sequence of monic orthogonal polynomials with respect to the Laguerre-type weight $ V(y) $. 
Hence, 
$G_n(y) = L_n(y)$ for all $n \geq 0$,
and substituting back, we obtain
$H_{2n}(x) = L_n(x^2)$.

\smallskip
\emph{ii) }
Since $ H_{2n+1}(x) $ is an odd polynomial, it can be factored as
$H_{2n+1}(x) = x G_n(x^2),$
where $ G_n(y) $ is a monic polynomial of degree $ n $ in the variable $ y = x^2 $.
Using the orthogonality of $ H_{2n+1}(x) $ with respect to the weight $ W(x) $, and substituting $ y = x^2 $, we compute 
\[
0= \int_{\mathbb{R}} H_{2n+1}(x) W(x) H_{2m+1}(x)^* \, dx=  \int_{0}^\infty G_n(y) \, y V(y) \, G_m(y)^* \, dy.
\]
where $ V(y) = y^{-1/2} W(\sqrt{y}) $ is the Laguerre-type weight obtained under the transformation $ y = x^2 $.
Thus, $ \{G_n(y)\}_{n \geq 0} $ is a sequence of orthogonal polynomials with respect to the weight $ y V(y) = y^{1/2} e^{-y} Z(\sqrt{y}) $.  
Since $ \{F_n(y)\}_{n \geq 0} $ is the unique sequence of monic orthogonal polynomials with respect to $ y V(y) $, and $ G_n(y) $ is also monic, 
it follows that $G_n(y) = F_n(y)$ and we conclude that 
\[
H_{2n+1}(x) = x \, F_n(x^2), \quad \text{for all } n \geq 0.
\]
\end{proof}

\begin{cor}\label{H_L-nomonicos}
Let \( W(x) = e^{-x^2} Z(x) \) be a Hermite-type weight matrix such that  \( W(x) = W(-x) \), and    let \( \{Q_n\}_{n\geq 0} \) be any sequence of MVOP 
with respect to \( W \). Then,
\[
P_n(y) = Q_{2n}(\sqrt{y}) \qquad \text{ and } \qquad R_n(y) = y^{-1/2} Q_{2n+1}(\sqrt{y})
\]
are  sequences of MVOP with respect to the Laguerre-type weights \( V(y) = y^{-1/2} e^{-y} Z(\sqrt{y}) \) and \( U(y) = y^{1/2} e^{-y} Z(\sqrt{y}) \), respectively. 
\end{cor}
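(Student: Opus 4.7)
The plan is to reduce the statement to Theorem~\ref{hermite-laguerre} by using the standard fact, recalled in Section~\ref{sec-background}, that any two sequences of MVOP for the same weight differ by left multiplication by invertible matrices. So I would begin by writing $Q_n(x)=M_n H_n(x)$ for a sequence of invertible matrices $M_n$, where $\{H_n\}_{n\geq 0}$ is the monic sequence associated with $W$.

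Next I would verify that the formulas for $P_n(y)$ and $R_n(y)$ genuinely define matrix polynomials of degree $n$ in $y$. This is where the symmetry assumption enters: by Proposition~\ref{par-impar}, $H_{2n}$ is an even function and $H_{2n+1}$ is odd, and since left multiplication by a constant matrix commutes with $x\mapsto -x$, the same parity holds for $Q_{2n}$ and $Q_{2n+1}$. Consequently $Q_{2n}(\sqrt{y})$ is a well-defined (single-valued) polynomial of degree $n$ in $y$, and $Q_{2n+1}(x)=x\,G_n(x^2)$ for some matrix polynomial $G_n$ of degree $n$, so $y^{-1/2}Q_{2n+1}(\sqrt{y})=G_n(y)$ is also a polynomial of degree $n$.

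Now I would apply Theorem~\ref{hermite-laguerre}: writing $H_{2n}(x)=L_n(x^2)$ and $H_{2n+1}(x)=x\,F_n(x^2)$, I obtain
\[
P_n(y)=M_{2n}L_n(y),\qquad R_n(y)=M_{2n+1}F_n(y).
\]
Since $\{L_n\}$ and $\{F_n\}$ are the monic MVOP with respect to $V$ and $U$ respectively, and since left multiplication by invertible constant matrices preserves orthogonality (because $\langle M P,M' Q\rangle_V=M\langle P,Q\rangle_V (M')^{*}$), the sequences $\{P_n\}$ and $\{R_n\}$ are MVOP with respect to $V$ and $U$.

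There is no serious obstacle here; the only point to be careful about is the well-definedness of $P_n$ and $R_n$ as matrix polynomials in the new variable $y$, which rests on the parity property of Proposition~\ref{par-impar}. Everything else reduces to a direct substitution into Theorem~\ref{hermite-laguerre} combined with the elementary observation about non-monic MVOP.
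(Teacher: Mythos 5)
Your proof is correct and follows exactly the route the paper intends: the corollary is stated without proof precisely because it reduces to Theorem~\ref{hermite-laguerre} via the standard fact that any MVOP sequence equals $M_nH_n$ with $M_n$ invertible, together with the parity property of Proposition~\ref{par-impar} (which, as stated there, already applies to any orthogonal sequence, not just the monic one). Your attention to the well-definedness of $P_n$ and $R_n$ as degree-$n$ polynomials in $y$ is the right point to check, and the argument is complete.
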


\subsection*{Example}
To illustrate clearly the results of this section, we consider the following explicit two-dimensional Hermite-type weight matrix, introduced in \cite{DG2004}:
\begin{equation}\label{W-2x2}
W(x) = e^{-x^{2}} \begin{pmatrix} 1 + a^{2}x^{4} & ax^{2} \\ ax^{2} & 1 \end{pmatrix}, \quad a \neq 0,
\end{equation}
which evidently satisfies the symmetry condition $ W(x) = W(-x) $ required in Theorem \ref{hermite-laguerre}. A sequence of orthogonal polynomials associated to this weight was first provided explicitly in terms of classical Hermite polynomials in Corollary 5.5  of \cite{BPstrong}. From this, one can derive the monic orthogonal polynomials $H_n(x)$ explicitly as
\[
H_{n}(x) = \begin{pmatrix} 
1 & \frac{-a^3n (n-1)(2n+1)}{2\bigl(a^2n(n-1)+4\bigr)}\\[6pt]
0 & \frac{4}{a^2n(n-1)+4}
\end{pmatrix}{\scriptstyle h_n(x)}+ \begin{pmatrix} 
\frac{-2a(2n+1)}{a^2n(n-1)+4} & \frac{2 a^2 (2 n+1) }{a^2n(n-1)+4}\, x^2-1\\[6pt]
\frac{-4}{a^2n(n-1)+4} & \frac{4a}{a^2n(n-1)+4}\, x^2
\end{pmatrix}\tfrac{an(n-1)}4{\scriptstyle h_{n-2}(x)},
\]
where $ h_{n}(x) $ denotes the monic classical Hermite polynomial of degree $n$.

Under the quadratic transformation $ y = x^2 $, the Hermite-type weight $ W(x) $ transforms into the Laguerre-type weight
\begin{equation}\label{V-2x2}
V(y) = y^{-1/2} e^{-y} \begin{pmatrix} 
1 + a^{2}y^{2} & ay \\ 
ay & 1 
\end{pmatrix},
\end{equation}
as described in Proposition \ref{inner product}. From Theorem \ref{hermite-laguerre}, the sequence of monic orthogonal polynomials $\{L_n(y)\}_{n\geq 0}$ associated with the weight $V(y)$ is given by
\[
L_n(y)=H_{2n}(\sqrt{y}),
\]
which can be explicitly written as
\[
L_n(y)=\begin{pmatrix} 
1 & \frac{-a^3n (2n-1)(4n+1)}{2\bigl(a^2n(2n-1)+2\bigr)}\\[6pt]
0 & \frac{2}{a^2n(2n-1)+2}
\end{pmatrix}{\scriptstyle\ell_n^{(-1/2)}(y)}+ \begin{pmatrix} 
\frac{-a(4n+1)}{a^2n(2n-1)+2} & \frac{ a^2 (4n+1)}{a^2n(2n-1)+2} \, y -1\\[6pt]
\frac{-2}{a^2n(2n-1)+2} & \frac{2a }{a^2n(2n-1)+2}\,  y
\end{pmatrix}\tfrac{an(2n-1)}2{\scriptstyle\ell_{n-1}^{(-1/2)}(y)},
\]
where $\ell_n^{(\alpha)}(y)$ denotes the monic classical Laguerre polynomial of parameter $\alpha$.

Finally, consider the Laguerre-type weight
\[
U(y) = y V(y)= y^{1/2} e^{-y}\begin{pmatrix} 
1 + a^{2}y^{2} & ay \\ ay & 1 
\end{pmatrix}.
\]
According to Theorem \ref{hermite-laguerre}, the sequence of monic orthogonal polynomials $\{F_n(y)\}_{n\geq 0}$ for this weight is given by
\[
F_n(y)=y^{-1/2}H_{2n+1}(\sqrt{y}),
\]
which explicitly yields
\[
F_{n}(y) = \begin{pmatrix} 
1 & \frac{-a^3n(2n+1)(4n+3)}{2\bigl(a^2n(2n+1)+2\bigr)}\\[6pt]
0 & \frac{2}{a^2n(2n+1)+2}
\end{pmatrix}{\scriptstyle \ell_n^{(1/2)}(y)}+ \begin{pmatrix} 
\frac{-a(4n+3)}{a^2n(2n+1)+2} & \frac{a^2 (4n+3) }{a^2n(2n+1)+2}\, y-1\\[6pt]
\frac{-2}{a^2n(2n+1)+2} & \frac{2a}{a^2n(2n+1)+2}\, y
\end{pmatrix}\tfrac{an(2n+1)}2{\scriptstyle \ell_{n-1}^{(1/2)}(y)}.
\]

\smallskip

\section{Differential Operators under Quadratic Transformation}
\label{sec-operator}

In this section, we investigate how the algebra $\mathcal{D}(W)$ of differential operators associated with a Hermite-type weight matrix $W(x)$ transforms into the algebra $\mathcal{D}(V)$ corresponding to a Laguerre-type weight matrix $V(y)$ under the change of variables $y = x^2$. The relationship between the even-indexed Hermite polynomials $H_{2n}(x)$ and the Laguerre polynomials $L_n(y)$ plays a central role in this analysis.

Let $W(x) = e^{-x^2} Z(x)$ be a Hermite-type weight matrix satisfying $W(x) = W(-x)$, and let $V(y) = y^{-1/2} e^{-y} Z(\sqrt{y})$ denote the Laguerre-type weight obtained from the change of variable $y = x^2$.

\begin{remark}
Any differential operator 
\[
D = \sum_{j=0}^s \frac{d^j}{dx^j} A_j(x) \in \mathcal{D}(W),
\]
has coefficients $A_{2j}(x)$ that are even functions and $A_{2j+1}(x)$ that are odd functions. This property follows from the fact that the orthogonal polynomials $H_n(x)$ are eigenfunctions of $D$ for all $n$, together with Proposition \ref{par-impar}, which establishes that $H_{2n}(x)$ and $H_{2n+1}(x)$ are even and odd functions, respectively.
\end{remark}

Applying the change of variables $y = x^2$, any operator $D$ is transformed into an operator $\tilde{D}$, explicitly defined by
\begin{equation}\label{def-tildeD}
\bigl(F \cdot D\bigr)(\sqrt{y}) = \bigl(\tilde{F} \cdot \tilde{D}\bigr)(y), \qquad \text{where } \tilde{F}(y) = F(\sqrt{y}).
\end{equation}
For example, a second-order differential operator in $\mathcal{D}(W)$ of the form
\[
D = \frac{d^2}{dx^2} \big( A_2^2 x^2 + A^0_2 \big) + \frac{d}{dx} A_1 x + A_0,
\]
where $A_2^2$, $A_2^0$, $A_1$, and $A_0$ are constant matrices, transforms into the operator
\[
\tilde{D} = 4\frac{d^2}{dy^2} \, y \bigl(A_2^2 y + A_2^0\bigr) + 2\frac{d}{dy} \Bigl( \bigl(A_2^2 + A_1\bigr) y + A_2^0 \Bigr) + A_0.
\]

Note that the structure of the coefficients in $D$ ensures that all coefficients of $\tilde{D}$ are polynomials in $y$.

The following result shows that this transformation maps differential operators in $\mathcal{D}(W)$ into $\mathcal{D}(V)$.

\begin{thm}\label{symmetry-V}
Let $W(x)$ be a Hermite-type weight matrix such that $W(x) = W(-x)$, and let $V(y) = y^{-1/2} W(\sqrt{y})$ be the associated Laguerre-type weight. Then,
\begin{enumerate}[i)]
    \item If $D \in \mathcal{D}(W)$, then the transformed operator $\tilde{D}$ belongs to $\mathcal{D}(V)$.
    \item If $D$ is $W$-symmetric, then $\tilde{D}$ is $V$-symmetric.
\end{enumerate}
\end{thm}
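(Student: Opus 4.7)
The plan is to prove both parts by combining the Hermite--Laguerre correspondence of Theorem~\ref{hermite-laguerre} with the change-of-variable definition \eqref{def-tildeD}, exploiting the parity structure forced by $W(x)=W(-x)$.

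The first step for (i) is to verify that $\tilde D$ is a bona fide matrix differential operator with \emph{polynomial} coefficients in $y$. Here I would rely on the Remark preceding the theorem: the coefficients of $D=\sum_j \frac{d^j}{dx^j}A_j(x)$ satisfy $A_{2i}(-x)=A_{2i}(x)$ and $A_{2i+1}(-x)=-A_{2i+1}(x)$. Writing $A_{2i}(x)=\hat A_{2i}(x^2)$ and $A_{2i+1}(x)=x\,\hat A_{2i+1}(x^2)$, and expanding $\frac{d^j}{dx^j}[\tilde F(x^2)]$ by the chain rule, every surviving factor of $x$ carries an even exponent, so regrouping in powers of $y=x^2$ yields
\[
\tilde D=\sum_{k\ge 0}\frac{d^k}{dy^k}B_k(y),\qquad B_k\in \operatorname{Mat}_N(\mathbb{C})[y].
\]
The second-order example displayed immediately before the theorem is precisely this procedure in miniature. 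With $\tilde D$ so identified, applying \eqref{def-tildeD} with $F=H_{2n}$ and using both $H_{2n}\cdot D=\Lambda_{2n}H_{2n}$ and $H_{2n}(\sqrt y)=L_n(y)$ from Theorem~\ref{hermite-laguerre}(i) gives
\[
L_n\cdot\tilde D=(H_{2n}\cdot D)(\sqrt y)=\Lambda_{2n}\,L_n(y),\qquad n\ge 0.
\]
Since the $L_n$ are the monic orthogonal polynomials for $V$, this is exactly the condition $\tilde D\in\mathcal D(V)$.

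For (ii), my plan is to translate $W$-symmetry into $V$-symmetry through Proposition~\ref{inner product}. Given arbitrary matrix polynomials $P(y),Q(y)$, set $F(x)=P(x^2)$ and $G(x)=Q(x^2)$, so that $F,G$ are even and $\tilde F=P$, $\tilde G=Q$. The parity of the coefficients of $D$ forces $F\cdot D$ and $G\cdot D$ to be even as well, because each term $\frac{d^j F}{dx^j}A_j(x)$ has parity $(-1)^j\cdot(-1)^j=1$. Hence Proposition~\ref{inner product} applies to both sides of the $W$-symmetry identity, and combining with \eqref{def-tildeD} produces
\[
\langle P\cdot\tilde D,Q\rangle_V=\langle F\cdot D,G\rangle_W=\langle F,G\cdot D\rangle_W=\langle P,Q\cdot\tilde D\rangle_V.
\]
Since $P,Q$ were arbitrary, $\tilde D$ is $V$-symmetric.

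The main obstacle I anticipate is the polynomial-coefficient verification in the first step of (i): making the chain-rule bookkeeping airtight so that no half-integer powers of $y$ survive. The parity conditions on the $A_j$ are exactly what is needed for this cancellation, and the hypothesis $W(x)=W(-x)$ is what produces those parity conditions in the first place. Once that verification is in place, the remainder is essentially formal and reduces to two substitutions: the identification $H_{2n}(\sqrt y)=L_n(y)$ to transport the eigenvalue equation, and the change of variable in Proposition~\ref{inner product} to transport the symmetry identity.
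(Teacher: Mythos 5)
Your proposal is correct and follows essentially the same route as the paper: both parts hinge on transporting the eigenvalue equation through the identity $H_{2n}(\sqrt{y})=L_n(y)$ from Theorem~\ref{hermite-laguerre} and transporting the symmetry identity through Proposition~\ref{inner product}. The only noteworthy difference is how the polynomiality of the coefficients of $\tilde{D}$ is justified --- you do it up front by chain-rule bookkeeping using the parity of the $A_j$, while the paper deduces it afterwards from the fact that $\tilde{D}$ has polynomial eigenfunctions of every degree together with Proposition~\ref{GTeigenvalues}; both are valid, and your explicit check that $F\cdot D$ stays even before invoking Proposition~\ref{inner product} is a welcome extra precaution.
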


\begin{proof}
Let $H_n(x)$ be the monic orthogonal polynomials associated with $W(x)$. Then
\[
H_n \cdot D = \Lambda_n(D) H_n, \quad \text{for all } n \geq 0,
\]
where $\Lambda_n(D)$ is the matrix eigenvalue. By definition, the transformed function $\tilde{H}_n(y) = H_n(\sqrt{y})$ is an eigenfunction of $\tilde{D}$ with the same eigenvalue.

Theorem \ref{hermite-laguerre} implies that $H_{2n}(x) = L_n(x^2)$, where $L_n(y)$ is the monic orthogonal polynomial associated with $V(y)$. Therefore, $\tilde{H}_{2n}(y) = L_n(y)$ satisfies
\[
L_n \cdot \tilde{D} = \Lambda_{2n}(D) L_n,
\]
proving that $\tilde{D} \in \mathcal{D}(V)$. In particular, $\Lambda_n(\tilde{D}) = \Lambda_{2n}(D)$.

If $D$ is $W$-symmetric, then
\[
\langle P \cdot D, Q \rangle_W = \langle P, Q \cdot D \rangle_W, \quad \text{for all } P, Q \in \operatorname{Mat}(\mathbb{C})[x].
\]
Using Proposition \ref{inner product}, we conclude that the transformed operator $\tilde{D}$ satisfies
\[
\langle \tilde{P} \cdot \tilde{D}, \tilde{Q} \rangle_V = \langle \tilde{P}, \tilde{Q} \cdot \tilde{D} \rangle_V, \quad \text{for all } \tilde{P}, \tilde{Q} \in \operatorname{Mat}_N(\mathbb{C})[y].
\]
Thus, $\tilde{D}$ is $V$-symmetric.
\end{proof}

\begin{remark}\label{remark-poly-coeff}
We have shown that for any $D \in \mathcal{D}(W)$, the transformed operator $\tilde{D}$ has polynomial eigenfunctions of every degree $n$. Therefore, the coefficients of $\tilde{D}$ must be polynomials in $y$ (see Proposition \ref{GTeigenvalues}).

This generalizes the earlier observation for second-order operators, where the polynomial nature of the coefficients was established explicitly.
\end{remark}

In the case where the leading coefficient $A_2(x)$ is the identity matrix, we obtain the following result.

\begin{cor}\label{simetr V-cor}
Let $D = \frac{d^2}{dx^2} I + \frac{d}{dx} A_1 x + A_0$ be a differential operator in $\mathcal{D}(W)$. Then, under the change of variables $y = x^2$, the transformed operator
\[
\tilde{D} = 4 \frac{d^2}{dy^2} y I + 2 \frac{d}{dy} \big(A_1 y + I\big) + A_0,
\]
belongs to $\mathcal{D}(V)$.
\end{cor}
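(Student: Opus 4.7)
The plan is to recognize that this corollary is essentially a specialization of the general second-order computation displayed just before the statement, combined with the conclusion of Theorem~\ref{symmetry-V}. Concretely, the operator $D = \frac{d^2}{dx^2} I + \frac{d}{dx} A_1 x + A_0$ falls under the general form $D = \frac{d^2}{dx^2}(A_2^2 x^2 + A_2^0) + \frac{d}{dx} A_1 x + A_0$ treated in the example of Section~\ref{sec-operator}, namely the case $A_2^2 = 0$ and $A_2^0 = I$. Substituting these values into the formula
\[
\tilde{D} = 4\frac{d^2}{dy^2} y (A_2^2 y + A_2^0) + 2\frac{d}{dy}\bigl((A_2^2 + A_1)y + A_2^0\bigr) + A_0
\]
gives immediately the desired expression for $\tilde{D}$.

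For a self-contained derivation, I would verify this substitution by a direct chain-rule computation. Writing $F(x) = \tilde{F}(x^2)$ with $\tilde{F}(y) = F(\sqrt{y})$, one has $F'(x) = 2x\,\tilde{F}'(x^2)$ and $F''(x) = 2\tilde{F}'(x^2) + 4x^2 \tilde{F}''(x^2)$. Plugging into $(F \cdot D)(x) = F''(x) + F'(x) A_1 x + F(x) A_0$ and regrouping in the variable $y = x^2$ yields
\[
(\tilde{F} \cdot \tilde{D})(y) = 4y\,\tilde{F}''(y) + 2\tilde{F}'(y)(A_1 y + I) + \tilde{F}(y) A_0,
\]
which is exactly the operator claimed in the statement.

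Finally, to conclude that $\tilde{D} \in \mathcal{D}(V)$, I would invoke Theorem~\ref{symmetry-V}.i), which applies because $D \in \mathcal{D}(W)$ by hypothesis and the transformation $y = x^2$ is precisely the one considered there. Since the explicit form of $\tilde{D}$ has polynomial coefficients in $y$, consistency with Remark~\ref{remark-poly-coeff} is also manifest. There is no real obstacle here: the proof amounts to a chain-rule specialization of a computation already established in the paper, so the entire argument is essentially a brief verification.
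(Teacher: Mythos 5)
Your proposal is correct and follows essentially the same route as the paper, which presents this corollary as an immediate specialization (with $A_2^2=0$, $A_2^0=I$) of the general second-order transformation formula displayed after \eqref{def-tildeD}, combined with Theorem~\ref{symmetry-V}.i). Your explicit chain-rule verification of that formula is accurate and merely makes the implicit computation fully self-contained.
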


We now examine the connection between odd-indexed Hermite polynomials and the orthogonal polynomials associated with the weight $U(y) = y^{1/2} W(\sqrt{y})$. In this case, $H_{2n+1}(x)$ admits the factorization
\[
H_{2n+1}(x) = x F_n(x^2),
\]
where $F_n(y)$ is a sequence of monic matrix-valued polynomials. Our goal is to deduce a differential equation for $F_n(y)$, starting from the fact that $H_{2n+1}(x)$ is an eigenfunction of a differential operator $D \in \mathcal{D}(W)$.
To isolate the action on $F_n$, we define a new operator $E$ by the identity
\begin{equation}\label{def-E}
E \,x = x\, D  
\end{equation}
and then we define the operator $\tilde{E}$ as the image of $E$ under the change of variables $y = x^2$, following the same rule as in~\eqref{def-tildeD}.

\begin{thm}\label{symmetry-Vhat}
Let $W(x) = e^{-x^2} Z(x)$ be a Hermite-type weight such that $W(x) = W(-x)$, and let $U(y) = y^{1/2} e^{-y} Z(\sqrt{y})$ be the associated Laguerre-type weight with parameter $\alpha = \tfrac{1}{2}$.

If $D \in \mathcal{D}(W)$, then the operator $\tilde{E}$ defined above belongs to $\mathcal{D}(U)$.
\end{thm}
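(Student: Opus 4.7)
The plan mirrors the proof of Theorem \ref{symmetry-V}: transfer the eigenvalue equation for $H_{2n+1}$ across the substitution $y = x^2$, using the factorization $H_{2n+1}(x) = x F_n(x^2)$ supplied by Theorem \ref{hermite-laguerre}. Applying the defining identity $E \cdot x = x \cdot D$ with input $P = F_n(x^2)$ gives
\[
x \bigl(F_n(x^2) \cdot E\bigr) = \bigl(x F_n(x^2)\bigr) \cdot D = H_{2n+1}(x) \cdot D = \Lambda_{2n+1}(D)\, x F_n(x^2),
\]
and cancelling the factor $x$ yields $F_n(x^2) \cdot E = \Lambda_{2n+1}(D)\, F_n(x^2)$. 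After the substitution this becomes $F_n(y) \cdot \tilde E = \Lambda_{2n+1}(D)\, F_n(y)$ for all $n \geq 0$. Since $\{F_n\}_{n \geq 0}$ is the monic orthogonal basis for $U$, the inclusion $\tilde E \in \mathcal{D}(U)$ will follow from Proposition \ref{GTeigenvalues} once we verify that $\tilde E$ is a differential operator whose coefficients are matrix polynomials in $y$.

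The main obstacle is precisely this last verification. A direct Leibniz calculation determines $E$ explicitly: writing $D = \sum_{j=0}^s \frac{d^j}{dx^j} A_j(x)$ and $E = \sum_{j=0}^s \frac{d^j}{dx^j} B_j(x)$, the identity $(xP)^{(j)} = x P^{(j)} + j P^{(j-1)}$ together with the matching $x(P \cdot E) = (xP)\cdot D$ forces
\[
B_j(x) = A_j(x) + \frac{(j+1)\, A_{j+1}(x)}{x}.
\]
By the remark preceding Theorem \ref{symmetry-V}, $A_{2k}$ is an even matrix polynomial and $A_{2k+1}$ is odd, so $B_{2k}$ is itself polynomial, whereas $B_{2k+1}$ may carry a simple pole at $x = 0$ inherited from $A_{2k+2}(0)$.

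These apparent singularities disappear after the substitution $y = x^2$. A short induction shows that, for $P(x) = \phi(x^2)$, the derivative $P^{(j)}(x)$ is a polynomial in $x^2$ in the functions $\phi^{(\ell)}(x^2)$ when $j$ is even, and equals $x$ times such an expression when $j$ is odd. Substituting into $\sum_j P^{(j)}(\sqrt y)\, B_j(\sqrt y)$ and regrouping by the order of the $y$-derivative, each $1/\sqrt y$ coming from $B_{2k+1}(\sqrt y)$ is absorbed by a compensating factor $\sqrt y$ coming from $P^{(2k+1)}(\sqrt y)$. Hence every coefficient of $\tilde E$ is a matrix polynomial in $y$, with degree bounded as required by Proposition \ref{GTeigenvalues}. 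Combined with the eigenvalue identity established in the first paragraph, this gives $\tilde E \in \mathcal{D}(U)$.
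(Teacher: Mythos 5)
Your proposal is correct and follows essentially the same route as the paper: transfer the eigenvalue relation $H_{2n+1}\cdot D=\Lambda_{2n+1}(D)H_{2n+1}$ through the identity $E\,x=x\,D$ and the factorization $H_{2n+1}(x)=xF_n(x^2)$, then pass to $y=x^2$. The only difference is that where the paper obtains polynomiality of the coefficients of $\tilde E$ by citing Proposition~\ref{GTeigenvalues}, you verify it directly via the formula $B_j=A_j+(j+1)A_{j+1}/x$ and the parity of the $A_j$; this is a more explicit justification of the same step and is correct.
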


\begin{proof}
Since $D \in \mathcal{D}(W)$, the orthogonal polynomials $H_n(x)$ satisfy
\begin{equation}\label{eigenvalue}
(H_n \cdot D)(x) = \Lambda_n(D) H_n(x), \quad \text{for all } n \geq 0.
\end{equation}

For odd indices, Theorem \ref{hermite-laguerre} gives $H_{2n+1}(x) = x F_n(x^2)$, where $F_n(y)$ is the sequence of monic orthogonal polynomials associated with $U(y)$. Using the identity $x D = E x$ and substituting into~\eqref{eigenvalue}, we obtain
\[
(F_n \cdot E)(x^2) = \Lambda_{2n+1}(D) F_n(x^2).
\]

Applying the change of variables $y = x^2$, and using the definition of $\tilde{E}$, we get
\[
(F_n \cdot \tilde{E})(y) = \Lambda_{2n+1}(D) F_n(y), \quad \text{for all } n \geq 0.
\]
By Proposition \ref{GTeigenvalues}, the operator $\tilde{E}$ has polynomial coefficients, hence $\tilde{E} \in \mathcal{D}(U)$. 
\end{proof}

\subsection*{Example} \mbox{}
To illustrate the results of this section, we consider the Hermite-type weight matrix
\[
W(x) = e^{-x^2}
\begin{pmatrix}
    1 + a^2 x^4 & ab x^4 & a x^2 \\
    ab x^4 & 1 + b^2 x^4 & b x^2 \\
    a x^2 & b x^2 & 1
\end{pmatrix}
= e^{-x^2} e^{A x^2} e^{A^* x^2},
\]
where
\[
A = \begin{psmallmatrix}
    0 & 0 & a \\ 0 & 0 & b \\ 0 & 0 & 0
\end{psmallmatrix},\qquad a^2+b^2>0,.
\]
In \cite{DG2005}, the authors proved that the differential operator
\[
D = \frac{d^2}{dx^2} I + \frac{d}{dx} \, 2x
\begin{pmatrix}
    -1 & 0 & 2a \\
    0 & -1 & 2b \\
    0 & 0 & -1
\end{pmatrix}
+ 
\begin{pmatrix}
    -4 & 0 & 2a \\
    0 & -4 & 2b \\
    0 & 0 & 0
\end{pmatrix},
\]
is a $W$-symmetric differential operator.
Under the change of variable $y = x^2$, the operator $D$ transforms into
\[
\tilde{D} = 4\frac{d^2}{dy^2} I + 2\frac{d}{dy}
\begin{pmatrix}
    1 - 2y & 0 & 4a y \\
    0 & 1 - 2y & 4b y \\
    0 & 0 & 1 - 2y
\end{pmatrix}
+
\begin{pmatrix}
    -4 & 0 & 2a \\
    0 & -4 & 2b \\
    0 & 0 & 0
\end{pmatrix}.
\]
By Theorem~\ref{symmetry-V}, this operator belongs to the algebra $\mathcal{D}(V)$, where
\[
V(y) = y^{-1/2} e^{-y}
\begin{pmatrix}
    1 + a^2 y^2 & ab y^2 & a y \\
    ab y^2 & 1 + b^2 y^2 & b y \\
    a y & b y & 1
\end{pmatrix},
\]
is a Laguerre-type weight with parameter $\alpha = -\tfrac12$. Moreover, $\tilde{D}$ is symmetric with respect to $V(y)$.

Similarly, Theorem \ref{symmetry-Vhat} implies that the differential operator
\[
\tilde{E} = 4\frac{d^2}{dy^2} I + 2\frac{d}{dy}
\begin{pmatrix}
    3 - 2y & 0 & 4a y \\
    0 & 3 - 2y & 4b y \\
    0 & 0 & 3 - 2y
\end{pmatrix}
+
\begin{pmatrix}
    -6 & 0 & 6a \\
    0 & -6 & 6b \\
    0 & 0 & -2
\end{pmatrix},
\]
belongs to $\mathcal{D}(U)$, where
\[
U(y) = y V(y) = y^{1/2} e^{-y}
\begin{pmatrix}
    1 + a^2 y^2 & ab y^2 & a y \\
    ab y^2 & 1 + b^2 y^2 & b y \\
    a y & b y & 1
\end{pmatrix}.
\]

\section{Darboux Transformations and Classical Weights}\label{sec-darboux}

Darboux transformations are a powerful tool for generating new weight matrices from classical ones, while preserving the existence of differential operators in their associated algebras. Originally developed in the scalar case for orthogonal polynomials and differential equations \cites{MS91,Zhed97}, they have been extended to the matrix setting, where they play a key role in understanding spectral properties and bispectrality \cites{Gru96,Gru10,GY02}. Their relevance has become particularly clear in relation to the matrix Bochner problem: in \cite{CY18}, Casper and Yakimov show that, under certain conditions, all weight matrices admitting a second-order differential operator in the algebra \( \mathcal{D}(W) \)---that is, all solutions to the matrix Bochner problem---can be obtained as Darboux transformations of classical weights. In this section, we examine how the Hermite–Laguerre correspondence established in Section~\ref{sec-weight} interacts with Darboux transformations. Specifically, we prove that if a symmetric Hermite-type weight is a Darboux transformation of the classical diagonal weight \( w(x) = e^{-x^2} I \), then the associated Laguerre-type weights also inherit this structure. This not only extends the scope of the Hermite–Laguerre relationship to a broader algebraic framework, but also contributes to the ongoing effort to characterize which Bochner-type solutions arise from Darboux transformations. Indeed, as shown in \cites{BP23,BP24-2}, there exist Bochner weights that do not arise in this way, highlighting the interest of understanding such transformations in detail.

To set the stage for our results, we now review the definition and fundamental properties of Darboux transformations in the matrix-valued setting.

We say that  $\mathcal{V} \in \operatorname{Mat}_{N}(\Omega [x])$ is a  {\em degree-preserving} differential operator if the degree of $P(x) \cdot \mathcal{V}$ is equal to the degree of $P(x)$ for all $P(x) \in \operatorname{Mat}_{N}(\mathbb{C})[x]$.


\begin{definition}\label{Darboux-def}
Let $W(x)$ and $\tilde{W}(x)$ be weight matrices with associated monic orthogonal polynomials $\{P_n\}_{n \geq 0}$ and $\{\tilde{P}_n\}_{n \geq 0}$, respectively. We say that $\tilde{W}(x)$ is a \emph{Darboux transformation} of $W(x)$ if there exists an operator $D \in \mathcal{D}(W)$ that admits a factorization $D = \mathcal{V} \mathcal{N}$, where $\mathcal{V}$ and $\mathcal{N}$ are degree-preserving operators and
\[
P_n \cdot \mathcal{V} = A_n \tilde{P}_n,
\]
for a sequence of  matrices $A_n \in \operatorname{Mat}_N(\mathbb{C})$.
\end{definition}

As a consequence of the above definition,  we have that 
the operator $\tilde D= \mathcal N \mathcal V\in \mathcal D(\tilde W)$. Moreover, we have 
\[
\mathcal{V} \mathcal{D}(\tilde{W}) \mathcal{N} \subseteq \mathcal{D}(W)
\quad \text{and} \quad
\mathcal{N} \mathcal{D}(W) \mathcal{V} \subseteq \mathcal{D}(\tilde{W}).
\]

The following result provides a practical characterization of Darboux transformations in terms of a single intertwining operator,
without requiring the factorization of a differential operator.

\begin{thm}[\cite{BP25}]\label{darboux-thm}
A weight 
$\tilde W(x)$ is a {Darboux transformation} of $W(x)$ if and only if 
there exists a differential operator $\mathcal{V}\in \operatorname{Mat}_{N}(\Omega[x])$ such that 
    \begin{equation}\label{darboux-1}
            P_{n} \cdot \mathcal{V}  = A_{n}\tilde {P}_{n}, \quad \text{ for all } n\in \mathbb{N}_{0}, 
    \end{equation}
where $A_{n} \in \operatorname{Mat}_{N}(\mathbb{C})$ invertible for all 
$n$.
\end{thm}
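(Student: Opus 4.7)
The forward direction is essentially built into the definition: if $\tilde W$ is a Darboux transformation of $W$, Definition~\ref{Darboux-def} already supplies degree-preserving operators $\mathcal V,\mathcal N$ with $\mathcal V\mathcal N\in\mathcal D(W)$ and $P_n\cdot\mathcal V=A_n\tilde P_n$. Since $\mathcal V$ is degree-preserving and $\tilde P_n$ is monic of degree $n$, the leading matrix coefficient of $P_n\cdot\mathcal V$ coincides with $A_n$, which therefore must be invertible.

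For the reverse direction, suppose $\mathcal V\in\operatorname{Mat}_N(\Omega[x])$ satisfies $P_n\cdot\mathcal V=A_n\tilde P_n$ with each $A_n$ invertible. The strategy is to manufacture a polynomial-coefficient partner operator $\mathcal N$ such that $\tilde P_m\cdot\mathcal N=B_m P_m$ for some invertible matrices $B_m$; the product $D:=\mathcal V\mathcal N$ will then satisfy $P_n\cdot D=A_n B_n P_n$, and Proposition~\ref{GTeigenvalues} will place $D\in\mathcal D(W)$, yielding the desired factorization. I would construct $\mathcal N$ through the weighted formal adjoint of $\mathcal V$, characterized by the cross-inner-product identity
\begin{equation*}
\langle P\cdot\mathcal V,\,Q\rangle_{\tilde W}\;=\;\langle P,\,Q\cdot\mathcal N\rangle_W,\qquad P,Q\in\operatorname{Mat}_N(\mathbb{C})[x].
\end{equation*}
Because $W$ and $\tilde W$ decay exponentially at infinity, integration by parts leaves no boundary contributions, so this identity can be solved uniquely for $Q\cdot\mathcal N$ at each $Q$, determining $\mathcal N$ as a linear map on matrix polynomials. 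Combining this identity with the hypothesis and the orthogonality of $\{P_n\}_{n\ge 0}$ and $\{\tilde P_n\}_{n\ge 0}$ produces
\begin{equation*}
\langle P_n,\,\tilde P_m\cdot\mathcal N\rangle_W\;=\;\langle P_n\cdot\mathcal V,\,\tilde P_m\rangle_{\tilde W}\;=\;A_n\,\langle\tilde P_n,\tilde P_m\rangle_{\tilde W},
\end{equation*}
which vanishes whenever $n\ne m$. This forces $\tilde P_m\cdot\mathcal N=B_m P_m$, with $B_m$ given by a product of the invertible $A_m$ and the positive-definite squared-norm matrices of $P_m$ and $\tilde P_m$, hence invertible.

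The main obstacle will be upgrading this formally defined $\mathcal N$ to an honest element of $\operatorname{Mat}_N(\Omega[x])$: at the coefficient level the weighted adjoint involves the rational factor $\tilde W\,W^{-1}$ and is not manifestly polynomial. To conclude polynomiality I would exploit the identities $\tilde P_m\cdot\mathcal N=B_m P_m$ on the basis $\{\tilde P_m\}_{m\ge 0}$ to reconstruct the coefficients of $\mathcal N$ explicitly, in the spirit of Proposition~\ref{GTeigenvalues}, controlling both the order of $\mathcal N$ and the degrees of its matrix coefficients from the degrees of the polynomials on each side. Once $\mathcal N\in\operatorname{Mat}_N(\Omega[x])$ is established, it is automatically degree-preserving (since every $B_m$ is invertible), and the composition $D=\mathcal V\mathcal N$ supplies the factorization required by Definition~\ref{Darboux-def}, completing the proof.
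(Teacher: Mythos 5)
You should know at the outset that the paper does not prove Theorem~\ref{darboux-thm}: it is quoted from \cite{BP25} and used as a black box, so there is no in-paper argument to compare yours against. Judged on its own terms, your architecture is the natural one (forward direction straight from Definition~\ref{Darboux-def}; backward direction by building a partner operator $\mathcal N$ as a weighted adjoint of $\mathcal V$ and then feeding $D=\mathcal V\mathcal N$ into Proposition~\ref{GTeigenvalues}), but two steps as written are gaps rather than proofs. In the forward direction, from ``$P_n\cdot\mathcal V=A_n\tilde P_n$ has degree $n$'' you conclude that the leading coefficient $A_n$ ``must be invertible.'' For matrices this is a non sequitur: $\deg(A_n\tilde P_n)=n$ only gives $A_n\neq 0$. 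The conclusion is true, but you need the full strength of degree preservation, which holds for \emph{all} matrix polynomials: if $A_n$ were singular, pick $C\neq 0$ with $CA_n=0$; then $CP_n$ has degree $n$ while $(CP_n)\cdot\mathcal V=CA_n\tilde P_n=0$, contradicting degree preservation.

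In the backward direction the entire content of the theorem is that the linear map $\tilde P_m\mapsto B_mP_m$ is realized by a finite-order differential operator with \emph{polynomial} coefficients --- as a mere linear map on $\operatorname{Mat}_N(\mathbb{C})[x]$ it exists trivially and would make any weight a Darboux transform of any other. Your integration-by-parts adjoint does produce a finite-order operator, but its coefficients a priori involve $W^{-1}$ and derivatives of $F_j\tilde W$, and the promotion to polynomial coefficients is exactly the step you leave as a plan. Invoking Proposition~\ref{GTeigenvalues} ``in spirit'' does not close this: that proposition concerns eigenvalue relations $P_n\cdot D=\Lambda_nP_n$ for a single family, not intertwining relations between two distinct families. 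What is actually needed is the induction on $m$ starting from $\tilde P_0=I$ (so the zeroth coefficient of $\mathcal N$ equals $\tilde P_0\cdot\mathcal N=B_0P_0$, hence is polynomial, and so on up the order of $\mathcal N$) --- the same device the paper itself uses inside the proof of Theorem~\ref{thm:darboux-equivalence}. Two further points deserve a sentence each: your deduction that $W$-orthogonality of $\tilde P_m\cdot\mathcal N$ to all $P_n$ with $n\neq m$ forces $\tilde P_m\cdot\mathcal N=B_mP_m$ presupposes that polynomials are dense in the weighted $L^2$ space (true here thanks to the exponential decay hypothesis, but it must be said, since $\tilde P_m\cdot\mathcal N$ is at that stage only a smooth function); and once $\mathcal N\in\operatorname{Mat}_N(\Omega[x])$ is secured, your remaining steps (degree preservation of $\mathcal N$ from invertibility of the $B_m$, and $P_n\cdot(\mathcal V\mathcal N)=A_nB_nP_n$ placing $\mathcal V\mathcal N$ in $\mathcal D(W)$) are correct.
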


With this general framework in place, we now turn to Hermite-type weights and explore how the Darboux property is preserved under the transformation $y = x^2$. This leads to our main result in this section, which shows that if a Hermite-type weight arises as a Darboux transformation of the classical scalar weight, then the associated Laguerre-type weights inherit this structure.

Let us consider a Hermite-type weight 
\[
W(x) = e^{-x^2} Z(x), \quad \text{such that } W(x) = W(-x),
\]
and denote by \( H_n(x) \) the monic orthogonal polynomials associated with \( W(x) \), and by \( h_n(x) \) the classical monic Hermite polynomials orthogonal with respect to \( e^{-x^2} \).

\begin{thm}\label{thm:darboux-equivalence}
If \( W(x) = e^{-x^2} Z(x) \) is a Darboux transformation of \( w(x) = e^{-x^2} I \), then the Laguerre-type weights 
\[
V(y) = y^{-1/2} e^{-y} Z(\sqrt{y}) \qquad \text{and} \qquad U(y) = y^{1/2} e^{-y} Z(\sqrt{y}),
\]
are Darboux transformations of the diagonal Laguerre weights \( v(y) = y^{-1/2} e^{-y} I \) and \( u(y) = y^{1/2} e^{-y} I \), respectively.
\end{thm}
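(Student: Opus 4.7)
By Theorem \ref{darboux-thm}, the hypothesis supplies an operator $\mathcal{V}\in \operatorname{Mat}_N(\Omega[x])$ and invertible matrices $\{A_n\}_{n\geq 0}$ satisfying $h_n\cdot\mathcal{V}=A_nH_n$ for every $n\geq 0$, where $h_n$ denotes the scalar monic Hermite polynomial and $H_n$ the monic orthogonal polynomial for $W$. The plan is to transfer $\mathcal{V}$ through the change of variable $y=x^2$ into two operators $\tilde{\mathcal{V}},\tilde{\mathcal{W}}\in \operatorname{Mat}_N(\Omega[y])$ that intertwine $\ell_n^{(-1/2)}$ with $L_n$ and $\ell_n^{(1/2)}$ with $F_n$, respectively, and then apply Theorem \ref{darboux-thm} in the reverse direction to conclude that $V$ is a Darboux transformation of $v$ and $U$ one of $u$.

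The crucial preparatory step is to show that $\mathcal{V}$ is automatically parity-preserving, in the sense that its coefficient at $d^j/dx^j$ is an even function of $x$ when $j$ is even and an odd function when $j$ is odd. I would decompose $\mathcal{V}=\mathcal{V}^{+}+\mathcal{V}^{-}$ into parity-preserving and parity-reversing parts by splitting each coefficient into its even and odd pieces. Since both $w(x)=e^{-x^2}I$ and $W(x)$ are invariant under $x\mapsto -x$, Proposition \ref{par-impar} forces $h_n$ and $H_n$ to have parity $n\bmod 2$; replacing $x$ by $-x$ in $h_n\cdot\mathcal{V}=A_nH_n$ and comparing the two relations yields $h_n\cdot\mathcal{V}^{-}=0$ for every $n$. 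Since the scalar Hermite polynomials span the matrix polynomial space, $\mathcal{V}^{-}$ annihilates every matrix polynomial and therefore vanishes identically.

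With $\mathcal{V}$ parity-preserving, I would define $\tilde{\mathcal{V}}$ by $(F\cdot\mathcal{V})(\sqrt{y})=(G\cdot\tilde{\mathcal{V}})(y)$ whenever $F(x)=G(x^2)$ is even, and $\tilde{\mathcal{W}}$ by $(F\cdot\mathcal{V})(\sqrt{y})=\sqrt{y}\,(G\cdot\tilde{\mathcal{W}})(y)$ whenever $F(x)=xG(x^2)$ is odd; parity preservation is precisely what is needed for the right-hand sides to be well defined. A chain-rule computation then shows that $\tilde{\mathcal{V}}$ and $\tilde{\mathcal{W}}$ have polynomial coefficients in $y$, because odd-order derivatives of $G(x^2)$ carry an extra factor of $x$ that combines with the odd coefficient of $\mathcal{V}$ to produce an even power of $x$, i.e.\ a polynomial in $y$. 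Taking $F=h_{2n}$ and using $H_{2n}(x)=L_n(x^2)$ from Theorem \ref{hermite-laguerre} gives $\ell_n^{(-1/2)}\cdot\tilde{\mathcal{V}}=A_{2n}L_n$; analogously, $F=h_{2n+1}$ together with $H_{2n+1}(x)=xF_n(x^2)$ yields $\ell_n^{(1/2)}\cdot\tilde{\mathcal{W}}=A_{2n+1}F_n$. Since the $A_n$ are all invertible, Theorem \ref{darboux-thm} finishes the argument. I expect the main obstacle to lie in the parity decomposition together with verifying polynomiality of the transformed operators; both rest on the interaction between the parity of $\mathcal{V}$ and the factor of $\sqrt{y}$ produced by the chain rule under $y=x^2$.
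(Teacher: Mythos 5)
Your proposal is correct and follows essentially the same route as the paper: obtain the intertwining operator $\mathcal{V}$ from Theorem \ref{darboux-thm}, push it through $y=x^2$ (splitting into $\tilde{\mathcal{V}}$ for the even part and, via $\mathcal{E}x=x\mathcal{V}$, an operator for the odd part), use Theorem \ref{hermite-laguerre} to identify the transformed relations as $\ell_n^{(-1/2)}\cdot\tilde{\mathcal{V}}=A_{2n}L_n$ and $\ell_n^{(1/2)}\cdot\tilde{\mathcal{E}}=A_{2n+1}F_n$, and apply Theorem \ref{darboux-thm} in reverse. Your explicit parity decomposition $\mathcal{V}=\mathcal{V}^{+}+\mathcal{V}^{-}$ with $h_n\cdot\mathcal{V}^{-}=0$ forcing $\mathcal{V}^{-}=0$ is a welcome rigorous justification of the even/odd coefficient structure that the paper only asserts when arguing that the transformed operators have polynomial coefficients.
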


\begin{proof}
By Theorem \ref{darboux-thm}, there exists a differential operator \( \mathcal{V} \) with polynomial coefficients such that 
\begin{equation}\label{Darboux-herm}
(h_n I\cdot \mathcal{V})(x) = A_n H_n(x), \quad \text{for all } n \geq 0,
\end{equation}
where \( A_n \in \operatorname{Mat}_N(\mathbb{C}) \) is invertible for all \( n \). Under the change of variables \( y = x^2 \), this operator transforms into \( \tilde{\mathcal{V}} \), as defined in Section \ref{sec-operator}, yielding:
\[
(\tilde{h}_n I\cdot \tilde{\mathcal{V}})(y) = A_n \tilde{H}_n(y).
\]

According to the results
in Theorem \ref{hermite-laguerre}, we have
\[
H_{2n}(x) = L_n(x^2) \quad \text{and} \quad H_{2n+1}(x) = x F_n(x^2),
\]
where \( L_n(y) \) and \( F_n(y) \) are the monic orthogonal polynomials associated with \( V(y) \) and \( U(y) \), respectively. Similarly, for the classical scalar Hermite polynomials, we have:
\[
h_{2n}(x) = \ell_n^{(-1/2)}(x^2) \quad \text{and} \quad h_{2n+1}(x) = x \ell_n^{(1/2)}(x^2),
\]
where \( \ell_n^{(\alpha)}(y) \) denotes the monic Laguerre polynomials orthogonal with respect to \( y^{\alpha}e^{-y} \). Substituting into \eqref{Darboux-herm}, we obtain
\begin{equation}\label{Darboux-Laguerre}
(\ell_n^{(-1/2)}I \cdot \tilde{\mathcal{V}})(y) = A_{2n} L_n(y) \quad \text{and} \quad (\ell_n^{(1/2)}I \cdot \tilde{\mathcal{E}})(y) = A_{2n+1} F_n(y),
\end{equation}
where \( \tilde{\mathcal{E}} \) is the operator obtained by applying the transformation \( y = x^2 \) to the operator $\mathcal E$ defined by  \( \mathcal{E} \,x= x \mathcal{V}  \).

The coefficients of \( \tilde{\mathcal{V}} \) and \( \tilde{\mathcal{E}} \) are polynomials in \( y \). Indeed, these operators map the sequence \( \{\ell_n^{(\alpha)}(y)\}_{n \geq 0} \) into sequences of polynomials, as shown in \eqref{Darboux-Laguerre}. An inductive argument on the degree of the resulting polynomials implies that their coefficients must be polynomials in \( y \). This is consistent with \( \mathcal{V} \) having polynomial coefficients, which, due to the symmetry \( W(x) = W(-x) \), are even or odd functions of \( x \), ensuring that the transformation \( y = x^2 \) yields no non-polynomial terms.

Finally, by Theorem \ref{darboux-thm}, the identities in \eqref{Darboux-Laguerre} imply that \( V(y) \) and \( U(y) \) are Darboux transformations of \( v(y) \) and \( u(y) \), respectively.
\end{proof}

This result shows that both Laguerre-type weights arising from the Hermite–Laguerre transformation inherit the Darboux structure from their Hermite counterparts under the quadratic change \( y = x^2 \). The following examples illustrate this correspondence with explicit computations.




\subsection*{Example 1} 
It is known that the weight \( W(x) \), given in \eqref{W-2x2}, is a Darboux transformation of the diagonal Hermite weight \( w(x) = e^{-x^2} I \); see \cite{BPstrong}. Then, as a consequence of Theorem \ref{thm:darboux-equivalence}, we conclude that 
\[
V(y) = y^{-1/2} e^{-y} \begin{pmatrix} 
1 + a^{2}y^{2} & ay \\ 
ay & 1 
\end{pmatrix}, \qquad \text{and} \qquad U(y) = y^{1/2} e^{-y} \begin{pmatrix} 
1 + a^{2}y^{2} & ay \\ 
ay & 1 
\end{pmatrix}
\]
can be obtained as Darboux transformations of the classical Laguerre weights \( v(y) = y^{-1/2} e^{-y} I \) and \( u(y) = y^{1/2} e^{-y} I \), respectively.

\subsection*{Example 2}

We next consider a new example of a Hermite-type weight within the scope of Theorem~\ref{thm:darboux-equivalence}. Let 
\[
W(x) = e^{-x^2} \begin{pmatrix} 
1 + a^2 x^4 & a x^2 & a b x^4 \\ 
a x^2 & 1 & b x^2 \\ 
a b x^4 & b x^2 & 1 + b^2 x^4 
\end{pmatrix},
\]
which is factorized as \( W(x) = e^{-x^2} e^{A x^2} e^{A^* x^2} \), with
\[
A = \begin{pmatrix}
0 & a & 0 \\ 0 & 0 & 0 \\ 0 & b & 0
\end{pmatrix}, \quad a^2 + b^2 > 0.
\]

Based on the framework developed, for example, in \cites{BP23b, BPstrong}, we explicitly computed the differential operators \( \mathcal{V} \) and \( \mathcal{N} \) involved in Definition~\ref{Darboux-def}, showing that \( W(x) \) is a Darboux transformation of the classical weight \( w(x) = e^{-x^2} I \). While we omit some of the technical details for brevity, we present the explicit expressions of \( \mathcal{V} \) and \( \mathcal{N} \), which fully reveal the structure of the Darboux transformation.
 These operators are

\begin{align*}
\mathcal{V} &= \frac{d^2}{dx^2} \begin{pmatrix}
-a^2 - b^2 & a (a^2 + b^2) x^2 & 0 \\
0 & b (a^2 + b^2) x^2 & -a^2 - b^2 \\
0 & -\frac{(a^2 + b^2)^2}{4} & 0
\end{pmatrix} 
+ \frac{d}{dx} \begin{pmatrix}
2 b^2 x & 0 & -2 a b x \\
-2 a b x & 0 & 2 a^2 x \\
0 & (a^2 + b^2)^2 x & 0
\end{pmatrix} \\ &\quad  + \begin{pmatrix}
0 & 4 a & 0 \\
0 & 4 b & 0 \\
-a (a^2 + b^2) & \frac{(a^2 + b^2)^2}{2} & -b (a^2 + b^2)
\end{pmatrix},
\intertext{and}
\mathcal{N} &= \frac{d^2}{dx^2} \begin{pmatrix}
-a^2 - b^2 & 0 & -\frac{a (a^2 + b^2)^2 x^2}{4} \\
0 & 0 & -\frac{(a^2 + b^2)^2}{4} \\
0 & -a^2 - b^2 & -\frac{b (a^2 + b^2)^2 x^2}{4}
\end{pmatrix} \\
&\quad  + \frac{d}{dx} \begin{pmatrix}
2 (2 a^2 + b^2) x & 2 a b x & -a (a^2 + b^2)^2 x \\
0 & 0 & 0 \\
2 a b x & 2 (a^2 + 2 b^2) x & -b (a^2 + b^2)^2 x
\end{pmatrix} 
+ \begin{pmatrix}
2 a^2 & 2 a b & -\frac{a (a^2 + b^2)(2 + a^2 + b^2)}{2} \\
4 a & 4 b & 0 \\
2 a b & 2 b^2 & -\frac{b (a^2 + b^2)(2 + a^2 + b^2)}{2}
\end{pmatrix}.
\end{align*}

The composition \( D = \mathcal{V} \mathcal{N} \) defines a fourth-order differential operator in the algebra \( \mathcal{D}(w) \), since each of its entries can be expressed as a polynomial in \( \delta = \frac{d^2}{dx^2} - 2 \frac{d}{dx} x\), the classical second-order Hermite operator.
In fact, we have 

\begin{align*}  
D &= \begin{psmallmatrix}
(a^2 + b^2) \bigl( (a^2 + b^2) \delta^2 + 2 a^2 \delta \bigr) + 16 a^2 & 2 a b \bigl( (a^2 + b^2) \delta + 8 \bigr) & 0 \\
2 a b \bigl( (a^2 + b^2) \delta + 8 \bigr) & (a^2 + b^2) \bigl( (a^2 + b^2) \delta^2 + 2 b^2 \delta \bigr) + 16 b^2 & 0 \\
0 & 0 & (a^2 + b^2) \bigl( (a^2 + b^2) (\delta^2 - 6 \delta + 8) + 16 \bigr)
\end{psmallmatrix}.
\end{align*}

\medskip

By Theorem \ref{darboux-thm}, the sequence \( Q_n(x) = h_n I \cdot \mathcal{V} \), where \( h_n(x) \) denotes the \( n \)-th monic Hermite polynomial, gives orthogonal polynomials with respect to \( W(x) \), not necessarily monic. Normalizing \( \{Q_n(x)\}_{n \geq 0} \) yields the monic sequence \( \{H_n(x)\}_{n \geq 0} \):
\begin{equation}\label{Hermite3x3}
Q_n(x) = \begin{psmallmatrix}
2 b^2 n & 4 a & -2 a b n \\
-2 a b n & 4 b & 2 a^2 n \\
-a (a^2 + b^2) & \frac{(a^2 + b^2)^2 (2 n+1)}{2} & -b (a^2 + b^2)
\end{psmallmatrix} h_n(x) +
\begin{psmallmatrix}
-a^2 & a (a^2 + b^2) x^2 & -a b \\
- a b & b (a^2 + b^2) x^2 & -b^2 \\
0 & \frac{(a^2 + b^2)^2}{4} & 0
\end{psmallmatrix} n(n-1) h_{n-2}(x).
\end{equation}

\ 

Applying the quadratic change of variables \( y = x^2 \), we obtain the corresponding operators in the Laguerre setting. In particular, the weight 
\[
V(y) = y^{-1/2} e^{-y} \begin{pmatrix} 
1 + a^2 y^2 & a y & a b y^2 \\ 
a y & 1 & b y \\ 
a b y^2 & b y & 1 + b^2 y^2 
\end{pmatrix},
\]
is a Darboux transformation of the classical diagonal Laguerre weight $$ v(y) = y^{-1/2} e^{-y} I ,$$  and the degree-preserving operators \( \mathcal{V} \) and \( \mathcal{N} \) transform accordingly under the change of variables, yielding
\[
\tilde D = \tilde{\mathcal{V}} \tilde{\mathcal{N}} \in \mathcal{D}(v),
\]
where \( \tilde{\mathcal{V}} \) and \( \tilde{\mathcal{N}} \) are the images of \( \mathcal{V} \) and \( \mathcal{N} \), respectively, under the substitution \( y = x^2 \).
Explicit expressions for \( \tilde{\mathcal{V}} \) and \( \tilde{\mathcal{N}} \) are given below:

\begin{align*}
\tilde{\mathcal{V}} &= (a^2 + b^2) \frac{d^2}{dy^2} \begin{pmatrix}
-4 y & 4 a y^2 & 0 \\
0 & 4 b y^2 & -4 y \\
0 & -(a^2 + b^2) y & 0
\end{pmatrix} \displaybreak[0]\\ 
& \quad 
+ \frac{d}{dy} \begin{pmatrix}
4 b^2 y - 2 (a^2 + b^2) & 2 a (a^2 + b^2) y & -4 a b y \\
-4 a b y & 2 b (a^2 + b^2) y & 4 a^2 y - 2 (a^2 + b^2) \\
0 & (a^2 + b^2)^2 (2 y - 1/2) & 0
\end{pmatrix} \displaybreak[0]
\\ &\quad 
+ \begin{pmatrix}
0 & 4 a & 0 \\
0 & 4 b & 0 \\
-a (a^2 + b^2) & \frac{(a^2 + b^2)^2}{2} & -b (a^2 + b^2)
\end{pmatrix},
\end{align*}

and

\begin{align*}
\tilde{\mathcal{N}} &= -(a^2 + b^2) \frac{d^2}{dy^2} \begin{pmatrix}
4 y & 0 & a (a^2 + b^2) y^2 \\
0 & 0 & (a^2 + b^2) y \\
0 & 4 y & b (a^2 + b^2) y^2
\end{pmatrix} \\ & \quad 
+ \frac{d}{dy} \begin{pmatrix}
2 b^2 (2 y - 1) + 2 a^2 (4 y - 1) & 4 a b y & -\frac{5 a (a^2 + b^2)^2 y}{2} \\
0 & 0 & -\frac{(a^2 + b^2)^2}{2} \\
4 a b y & 2 a^2 (2 y - 1) + 2 b^2 (4 y - 1) & -\frac{5 b (a^2 + b^2)^2 y}{2}
\end{pmatrix} \\
&\quad + \begin{pmatrix}
2 a^2 & 2 a b & -\frac{a (a^2 + b^2)(2 + a^2 + b^2)}{2} \\
4 a & 4 b & 0 \\
2 a b & 2 b^2 & -\frac{b (a^2 + b^2)(2 + a^2 + b^2)}{2}
\end{pmatrix}.
\end{align*}

Let us observe that these operators satisfy that the fourth-order differential operator \( \tilde{\mathcal{N}} \tilde{\mathcal{V}} \) lies in \( \mathcal{D}(V) \).

\medskip
By Theorem \ref{hermite-laguerre}, the monic orthogonal polynomials with respect to \( V(y) \) are given by \( L_n(y) = H_{2n}(\sqrt{y}) \). Explicitly,
\begin{align*}
L_n(y) &= \begin{pmatrix}
1 & -\frac{a (a^2 + b^2) n (-1 + 2 n) (1 + 4 n)}{4 + 2 (a^2 + b^2) n (-1 + 2 n)} & 0 \\
0 & \frac{2}{2 + (a^2 + b^2) n (-1 + 2 n)} & 0 \\
0 & -\frac{b (a^2 + b^2) n (-1 + 2 n) (1 + 4 n)}{4 + 2 (a^2 + b^2) n (-1 + 2 n)} & 1
\end{pmatrix} \ell_n^{(-1/2)}(y) \\
&\quad + \begin{pmatrix}
\frac{a^2 (1 + 4 n)}{2} & a + \frac{a (a^2 + b^2) (n (-1 + 2 n - 4 y) - y)}{2} & \frac{a b (1 + 4 n)}{2} \\
a & -(a^2 + b^2) y & b \\
\frac{a b (1 + 4 n)}{2} & b + \frac{b (a^2 + b^2) (n (-1 + 2 n - 4 y) - y)}{2} & \frac{b^2 (1 + 4 n)}{2}
\end{pmatrix} \frac{n (1 - 2 n)}{2 + (a^2 + b^2) n (-1 + 2 n)} \ell_{n-1}^{(-1/2)}(y).
\end{align*}

\smallskip

In addition to \( V(y) \), we also consider the Laguerre-type weight
\[
U(y) = y^{1/2} e^{-y} \begin{pmatrix} 
1 + a^2 y^2 & a y & a b y^2 \\ 
a y & 1 & b y \\ 
a b y^2 & b y & 1 + b^2 y^2 
\end{pmatrix},
\]
which arises from the odd-degree Hermite polynomials, as described in Theorem~\ref{hermite-laguerre}.

A sequence of orthogonal polynomials for \( U(y) \) is given by \( P_n(y) = y^{-1/2} Q_{2n+1}(\sqrt{y}) \), where \( Q_n(x) \) is defined in \eqref{Hermite3x3} (see Corollary~\ref{H_L-nomonicos}). Explicitly,
\[
P_n(y) = \begin{psmallmatrix}
2 b^2 (1 + 2 n) & 4 a & -2 a b (1 + 2 n) \\
-2 a b (1 + 2 n) & 4 b & 2 a^2 (1 + 2 n) \\
-a (a^2 + b^2) & \frac{(a^2 + b^2)^2 (3 + 4 n)}{2} & -b (a^2 + b^2)
\end{psmallmatrix} \ell_n^{(1/2)}(y) + \begin{psmallmatrix}
-2 a^2 & 2 a (a^2 + b^2) y & -2 a b \\
-2 a b & 2 b (a^2 + b^2) y & -2 b^2 \\
0 & \frac{(a^2 + b^2)^2}{2} & 0
\end{psmallmatrix} n (2 n + 1) \ell_{n-1}^{(1/2)}(y),
\]
where \( \ell_n^{(1/2)}(y) \) denotes the classical monic Laguerre polynomial with parameter \( \alpha = \frac{1}{2} \).

\smallskip
From Theorem~\ref{thm:darboux-equivalence}, the weight \( U(y) \) is a Darboux transformation of the classical diagonal Laguerre weight \( u(y) = y^{1/2} e^{-y} I \). The differential operator \( \tilde{\mathcal{E}} \), obtained from the operator \( \mathcal{E} \) defined by \( \mathcal{E} x = x \mathcal{V} \) via the change of variables \( y = x^2 \), satisfies
\[
(\ell_n^{(1/2)} \cdot \tilde{\mathcal{E}})(y) = A_{2n+1} F_n(y),
\]
where \( F_n(y) \) is the monic orthogonal polynomial associated with \( U(y) \) as in Theorem~\ref{hermite-laguerre}. The operator \( \tilde{\mathcal{E}} \) is given explicitly by:
\begin{align*}
\tilde{\mathcal{E}} &= (a^2 + b^2) \frac{d^2}{dy^2} \begin{pmatrix}
-4 y & 4 a y^2 & 0 \\
0 & 4 b y^2 & -4 y \\
0 & -(a^2 + b^2) y & 0
\end{pmatrix} \\ &\quad + 2\frac{d}{dy} \begin{pmatrix}
2b^2y-3(a^2+b^2)& 3 a (a^2 + b^2) y & -2 a b y \\
-2 a b y & 3 b (a^2 + b^2) y & 2a^2y-3(a^2+b^2) \\
0 & \frac{(a^2 + b^2)^2 (4 y - 3)}{2} & 0
\end{pmatrix} \\
&\quad + \begin{pmatrix}
2 b^2 & 4 a & -2 a b \\
-2 a b & 4 b & 2 a^2 \\
-a (a^2 + b^2) & \frac{3 (a^2 + b^2)^2}{2} & -b (a^2 + b^2)
\end{pmatrix}.
\end{align*}

The operator
\begin{align*}
\tilde{\mathcal{F}} &= -(a^2 + b^2) \frac{d^2}{dy^2} \begin{pmatrix}
4 y & 0 & a (a^2 + b^2) y^2 \\
0 & 0 & (a^2 + b^2) y \\
0 & 4 y & b (a^2 + b^2) y^2
\end{pmatrix} \\ 
&\quad + \frac{d}{dy} \begin{pmatrix}
2 b^2 (2 y - 3) + 2 a^2 (4 y - 3) & 4 a b y & -\frac{7 a (a^2 + b^2)^2 y}{2} \\
0 & 0 & -\frac{3(a^2 + b^2)^2}{2} \\
4 a b y & 2 a^2 (2 y - 3) + 2 b^2 (4 y - 3) & -\frac{7 b (a^2 + b^2)^2 y}{2}
\end{pmatrix} \\
&\quad + \begin{pmatrix}
2 (3 a^2 + b^2) & 4 a b & -\frac{a (a^2 + b^2)(2 + 3 a^2 + 3 b^2)}{2} \\
4 a & 4 b & 0 \\
4 a b & 2 (a^2 + 3 b^2) & -\frac{b (a^2 + b^2)(2 + 3 a^2 + 3 b^2)}{2}
\end{pmatrix}
\end{align*}
satisfies  \( \tilde{\mathcal{E}} \tilde{\mathcal{F}} \in \mathcal{D}(u) \) and \( \tilde{\mathcal{F}} \tilde{\mathcal{E}} \in \mathcal{D}(U) \), 
closing the construction on the Laguerre side.
\smallskip

This example confirms the compatibility of the Darboux framework with the Hermite–Laguerre correspondence, while explicitly constructing the associated operators on the Laguerre side and illustrating the broader applicability of our method in the matrix-valued setting.

\section{Some examples in arbitrary dimension}
\label{sec-NxN}
In this section, we present a broad family of Hermite-type weight matrices \( W(x) \) in arbitrary dimension \( N \). Originally introduced in \cite{DG2005}, this family is particularly relevant for our purposes because it satisfies the symmetry condition \( W(x) = W(-x) \), which is not a generic property among Hermite-type weights. Although both the weights and their associated second-order differential operators are already known, we revisit them here to illustrate how the general results developed in this paper apply concretely in a highly structured setting.


The weights are defined by
\[
W(x) = e^{-x^2} e^{B x^2} e^{B^* x^2},
\]
where $ B $ is a block-nilpotent matrix given by:
\begin{equation}\label{ex-B}
    B = \begin{psmallmatrix}
        0 & V_1 & -V_1 V_2 & V_1 V_2 V_3 & \cdots & (-1)^k V_1 V_2 \cdots V_{k-1} \\
        0 & 0 & V_2 & -V_2 V_3 & \cdots & (-1)^{k-1} V_2 \cdots V_{k-1} \\
        0 & 0 & 0 & V_3 & \cdots & (-1)^{k-2} V_3 \cdots V_{k-1} \\
        \vdots & \vdots & \vdots & \vdots & \ddots & \vdots \\
        0 & 0 & 0 & 0 & \cdots & -V_{k-2} V_{k-1} \\
        0 & 0 & 0 & 0 & \cdots & V_{k-1} \\
        0 & 0 & 0 & 0 & \cdots & 0
    \end{psmallmatrix},
\end{equation}
where $ N = n_1 + \cdots + n_k $, with $ 1 \leq k \leq N $, and each $ V_i $ is a nonzero matrix of size $ n_i \times n_{i+1} $, for $ i = 1, \dots, k-1 $. 

In the particular case $N = 2$, the matrix $B$ reduces to $B = \begin{psmallmatrix} 0 & a \\ 0 & 0 \end{psmallmatrix}$, and the weight $W(x)$ coincides with the example given at the end of Section~\ref{sec-weight}, where it is written explicitly as a $2 \times 2$ matrix-valued function.
The example in Section \ref{sec-operator} corresponds to $N=3$ and $k=2$.

All these weights admit a symmetric second-order differential operator in the algebra $ \mathcal{D}(W) $:
\[
D = \frac{d^2}{dx^2} I + \frac{d}{dx} \, 2x(2B - I) + A_0,
\]
where $ A_0 $ is given explicitly by
\begin{equation}\label{ex-A0}
    A_0 = 2B - 4 \operatorname{diag}\bigl((k-1)I_{n_1},\, (k-2)I_{n_2},\, \ldots, \, I_{n_{k-1}}, \, 0 \cdot I_{n_k}\bigr).
\end{equation}

Since $ W(x) = W(-x) $, the change of variables $y = x^2$ transforms $W(x)$ into a Laguerre-type weight with parameter $ \alpha = -\tfrac12 $:
\[
V(y) = y^{-1/2} W(\sqrt{y}) = y^{-1/2} e^{-y} e^{B y} e^{B^* y}.
\]
By Theorem~\ref{symmetry-V} and Corollary~\ref{simetr V-cor}, the differential operator $ D \in \mathcal{D}(W) $ transforms into the Laguerre-type operator
\[
\tilde{D} = 4\frac{d^2}{dy^2} y I + 2\frac{d}{dy}(4B - I) + A_0,
\]
which is a symmetric operator in $ \mathcal{D}(V) $.

By factoring $ x $ from the odd-degree Hermite polynomials $ H_{2n+1}(x) $, we obtain the Laguerre-type weight
\[
U(y) = y^{1/2} W(\sqrt{y}) = y^{1/2} e^{-y} e^{B y} e^{B^* y},
\]
corresponding to the parameter $ \alpha = \tfrac12 $. By Theorem~\ref{symmetry-Vhat}, the differential operator $ D \in \mathcal{D}(W) $ transforms into the Laguerre-type operator
\[
\tilde{E} = 4\frac{d^2}{dy^2} y I + 2\frac{d}{dy} \bigl(2y(2B - I) + I\bigr) + A_0,
\]
which belongs to $ \mathcal{D}(U) $.

\medskip
This family provides an explicit illustration of the Hermite–Laguerre correspondence established in Sections~\ref{sec-weight} and \ref{sec-operator}, showing how the weights and associated differential operators transform under the change of variables \( y = x^2 \).
It reinforces the general principles developed in this paper and demonstrates their applicability to structured families of weights beyond low-dimensional cases.

\begin{remark}
To the best of our knowledge, the family of Hermite-type weights presented here is the only example currently available in the literature that explicitly satisfies the symmetry condition \( W(x) = W(-x) \) and admits a second-order differential operator in its algebra \( \mathcal{D}(W) \). While matrix-valued Laguerre-type weights with general values of the parameter \( \alpha \) were studied in \cite{DG2005-b}, our construction yields only the specific cases \( \alpha = \pm 1/2 \), which arise naturally from the Hermite–Laguerre correspondence under the quadratic transformation. This highlights the structural significance of the symmetry condition, which is essential for extending the classical relation to the matrix-valued setting. Whether this symmetric family can be obtained via Darboux transformations from classical weights remains an open question, and we conjecture this to be the case.
\end{remark}

\end{document}